\newtheorem{theorem}{Theorem}[section]
\newtheorem{lemma}[theorem]{Lemma}
\newtheorem{proposition}[theorem]{Proposition}
\newtheorem{corollary}[theorem]{Corollary}
\newtheorem*{MainTheorem}{Main Theorem}
\theoremstyle{remark}
\newtheorem{remark}[theorem]{Remark}
\newtheorem*{claim*}{Claim}
\newcommand{\C}{\ensuremath{\mathbb{C}}}
\newcommand{\R}{\ensuremath{\mathbb{R}}}
\newcommand{\g}[1]{\ensuremath{\mathfrak{#1}}}
\newcommand{\II}{\ensuremath{I\!I}}
\DeclareMathOperator{\tr}{tr}
\DeclareMathOperator{\id}{Id}
\DeclareMathOperator{\Ad}{Ad}
\DeclareMathOperator{\ad}{ad}
\DeclareMathOperator{\Exp}{Exp}
\DeclareMathOperator{\spann}{span}
\DeclareMathOperator{\Ric}{Ric}
\newcommand{\tabitem}{~~\llap{\textbullet}~~}
\newcommand{\Ss}{\ensuremath{\mathcal{S}}}
\begin{document}
\title[Ricci solitons as submanifolds of $\C H^n$]{Ricci solitons as submanifolds of \\ complex hyperbolic~spaces}
%    author one information
\author[A.~Cidre-D\'iaz]{\'Angel~Cidre-D\'iaz}
\address{CITMAga, 15782 Santiago de Compostela, Spain.\newline\indent Department of Mathematics, Universidade de Santiago de Compostela, Spain}
\email{angel.cidre.diaz@usc.es}
%    author two information
\author[V.~Sanmart\'in-L\'opez]{V\'ictor Sanmart\'in-L\'opez}
\address{CITMAga, 15782 Santiago de Compostela, Spain.\newline\indent Department of Mathematics, Universidade de Santiago de Compostela, Spain}
\email{victor.sanmartin@usc.es}

\begin{abstract}
Any homogeneous expanding Ricci soliton is known to be isometric to a Lie subgroup of the solvable part of the Iwasawa decomposition associated with a symmetric space of non-compact type, with the metric induced as a submanifold. In this paper, we classify and analyze the geometry of such Lie subgroups with Ricci soliton induced metric when the symmetric spaces are complex hyperbolic~spaces.
\end{abstract}

\thanks{The authors have been supported by Grant PID2022-138988NB-I00 funded by MICIU/AEI/10.13039/501100011033 and by ERDF, EU, by ED431F 2020/04 and ED431C 2023/31 (Xunta de Galicia, Spain). The first author acknowledges support of a FPU fellowship. 
} 
\subjclass[2020]{53C40, 53C35, 53C42}
\keywords{Homogeneous Ricci solitons, Einstein solvmanifolds, complex hyperbolic space.}
\maketitle
%%%%%%%%%%%%%%%%%%%%%%%%%%%%%%%%%%%%%%%%%%%%%%%%%%%%%%%%%%%%%%%%%%%%%%%%%%

\section{Introduction}
In this paper, we focus on the intersection between submanifold geometry in complex hyperbolic spaces with homogeneous Ricci soliton and Einstein metrics.

On the one hand, a Riemannian manifold $(M, g)$ is said to be a \emph{Ricci soliton} if its Ricci tensor, $\Ric$, reads as $\Ric = c g + \mathcal{L}_X g$, where $c$ is a real number and $\mathcal{L}_X$ denotes the Lie derivative with respect to a smooth vector field $X$ of $M$. Ricci solitons generalize \emph{Einstein metrics}, i.e.,  those whose Ricci tensor is a real multiple of the metric. The main source of interest on Ricci solitons stems from the fact that they are generalized fixed points of the Ricci-flow. This tool was introduced by Hamilton in~\cite{Ha82}, in order to evolve and improve metrics in his program to address the Thurston geometrization conjecture. Indeed, Ricci soliton metrics only dilate under the Ricci-flow, up to diffeomorphism, and such dilation is encoded in the cosmological constant $c$, so they are called \emph{steady} ($c = 0$), \emph{shrinking} ($c > 0$) or \emph{expanding} ($c < 0$). Later, the Ricci-flow was successfully utilized by Perelman to solve the Poincaré conjecture. Since then, Ricci solitons were deeply investigated in the literature, assuming many times homogeneity (see the survey~\cite{La09}), which converts the non-linear second order~PDE defining a Ricci soliton into an algebraic equation.

On the other hand, many investigations have been developed in the context of submanifold geometry of complex hyperbolic spaces over the last few years. We emphasize the following outstanding classifications: Hopf hypersurfaces with constant principal curvatures~\cite{Be89}, cohomogeneity one actions~\cite{BB01, BeTa07}, and isoparametric hypersurfaces~\cite{DDS:adv}. Some other relevant classifications addressed in these spaces are those of polar actions~\cite{DDK17}, homogeneous Lagrangian foliations~\cite{DDH} or hypersurfaces with constant principal curvatures under certain extra hypotheses~\cite{DD11}. See \cite{Jurgen, CeRy} for more information on submanifolds in complex space forms, or~\cite{Gold} for more information in complex hyperbolic geometry. 

Despite the wide range of submanifolds that have been investigated in complex hyperbolic spaces, these spaces do not admit Einstein hypersurfaces~\cite[Theorem~8.69]{CeRy}. Indeed, Einstein hypersurfaces are rare in symmetric spaces. Fialkow classified them in space forms~\cite{Fi38}, and they are totally umbilical hypersurfaces, hypersurfaces with index of relative nullity one (see~\cite[Chapter~7]{DaTo} for more details), or the product of spheres of the same Ricci curvature for the case of spheres. This result was extended later to the rest of the rank one symmetric spaces in several works by different authors (see~\cite[Theorem~1]{NiPa21} for a summary), and the only examples are geodesic spheres of certain radii, one in quaternionic projective spaces and one in the Cayley projective plane. In 2023, Nikolayevsky and Park completed the classification of Einstein hypersurfaces in irreducible symmetric spaces by analyzing those of rank~$\geq 2$~\cite{NiPa21}. There, the picture is the opposite to the rank one case, as there is a family of minimal Einstein hypersurfaces in symmetric spaces of non-compact type, and just one more example (non-complete and hence non-homogeneous) in $SL(3, \R)/SO(3)$ and in $SU(3)/SO(3)$, respectively.

Homogeneous Ricci solitons have been also investigated from this submanifold perspective. As we illustrate right after the statement of the Main Theorem, connected expanding homogeneous Ricci solitons lie, up to isometry, into a particular class of homogeneous submanifolds of symmetric spaces of non-compact type. Therefore, within this class of homogeneous submanifolds, it is natural to look for those with a Ricci soliton induced metric. With this approach, Tamaru~\cite{Ta11} constructed new examples of Einstein solvmanifolds, which are minimal from the submanifold viewpoint, and of Ricci solitons. Moreover, classifications by imposing conditions on the codimension~\cite{DST20, S22} were obtained by Domínguez-Vázquez, Tamaru and the second author.

In this paper, we focus on the classification of this particular class of homogeneous submanifolds that are Ricci solitons with the induced metric, when the symmetric spaces are complex hyperbolic spaces. Let us be more precise before stating the main result of this work. Let $G = SU(1,n)$ be the connected component of the identity of the isometry group of the complex hyperbolic space $\C H^n$ up to a finite quotient. Put $\g{g}$ for its Lie algebra, and let $\g{g} = \g{k} \oplus \g{p}$ be the Cartan decomposition of the latter, where $\g{k}$ is the Lie algebra of the isotropy group $K=S(U(1)U(n))$. Let $\g{g} = \g{g}_{-2 \alpha} \oplus \g{g}_{-\alpha} \oplus \g{g}_0 \oplus \g{g}_{\alpha} \oplus \g{g}_{2 \alpha}$ be its root space decomposition with respect to a maximal abelian subspace $\g{a}$ of $\g{p}$. Let $AN$ be the connected (solvable) Lie subgroup of $G$ whose Lie algebra is $\g{a} \oplus  \g{g}_{\alpha} \oplus \g{g}_{2 \alpha}$, which is the semidirect product of an abelian Lie group $A$ and a nilpotent Lie group $N$, with Lie algebras $\g{a}$ and $\g{n} = \g{g}_{\alpha} \oplus \g{g}_{2 \alpha}$, respectively.  Now, we select the metric on $AN$ making it isometric to $\C H^n$, which turns out to be left-invariant. We denote by $J$ the complex structure of $AN$ induced by the one in $\C H^n$. Let $B$ and $Z$ be unit vectors spanning $\g{a}$ and $\g{g}_{2\alpha}$ (which are one-dimensional), respectively, with $J B = Z$. 

The main purpose of this paper is to obtain the classification of the Lie subgroups of $AN \cong \C H^n$ that are Ricci solitons with the induced metric. We state now the main result of the paper. See Remark~\ref{remark:congruence} for more information about the congruence classes of the examples of the classification.

\begin{MainTheorem}\label{MainTheorem}
Let $S$ be a connected Lie subgroup of dimension $\geq 2$ of the solvable part $AN$ of the Iwasawa decomposition of the connected component of the identity of the isometry group of the complex hyperbolic space $\C H^n$. Then, $S$ is a Ricci soliton when considered with the induced metric if and only if one (and only one) of the following conditions holds for its Lie algebra~$\g{s}$ (see Remark~\ref{remark:notation} for the notation):
\begin{table}[h]
\renewcommand*{\arraystretch}{2}
\scriptsize
\scalebox{0.82}{\begin{tabular}{|l|l|l|l|l|}
\hline
\textbf{Item} &\textbf{Subalgebra}  & \textbf{Conditions} & \textbf{Einstein?} & \textbf{$S$ is isometric to} \\
\hline
\multirow{2}{0.01cm}{\begin{minipage}{0.8cm}
\begin{enumerate}[{\rm (i)}]
\item \label{MainTheorem:1}
\end{enumerate}
\end{minipage}}& \multirow{2}{*}{$\g{s} = \g{m}_{\pi/2} \oplus \R (V + t Z)$}
& \tabitem $\C V \perp \g{m}_{\pi/2}$
&  \multirow{2}{*}{Yes}  & \multirow{2}{*}{Euclidean space}   \\
& & \tabitem   $\dim\g{m}_{\pi/2} \geq 2 - \dim \R (V + t Z)$ &    &    \\
\hline
& &  
\tabitem $\C U$, $\C V \perp \g{m}_{\pi/2}$ &  &  \\
\begin{minipage}{0.8cm}
\begin{enumerate}[{\rm (i)}]
\setcounter{enumi}{1}
\item \label{MainTheorem:2}
\end{enumerate}
\end{minipage}& $\g{s} = \R (B + U + x Z) \oplus \g{m}_{\pi/2} \oplus \R (V + t Z)$ &\quad \tabitem If $V \neq 0$, then $\langle JU,V\rangle=-\frac{t}{2}$& Yes & Real hyperbolic space \\
& &\qquad \tabitem If $V = 0$, either $t = 0$ or $\g{m}_{\pi/2} = 0$& &	\\	
\hline
\begin{minipage}{0.8cm}
\begin{enumerate}[{\rm (i)}]
\setcounter{enumi}{2}
\item \label{MainTheorem:3}
\end{enumerate}
\end{minipage}&$\g{s} = \R(B + U) \oplus \g{m}_{\varphi} \oplus \g{g}_{2\alpha}$ & \tabitem  $\C U \perp \g{m}_{\varphi}$ \qquad \qquad \tabitem   $|U|=\tan \varphi$ & Yes & Complex hyperbolic space   \\		
\hline
\begin{minipage}{0.8cm}
\begin{enumerate}[{\rm (i)}]
\setcounter{enumi}{3}
\item \label{MainTheorem:4}
\end{enumerate}
\end{minipage} & $\g{s} = \g{m}_{\varphi} \oplus  \g{m}_{\pi/2} \oplus \g{g}_{2\alpha}$ & & No & Heisenberg group $\times \R^{\dim \g{m}_{\pi/2}}$  \\		
\hline
\begin{minipage}{0.8cm}
\begin{enumerate}[{\rm (i)}]
\setcounter{enumi}{4}
\item \label{MainTheorem:5}
\end{enumerate}
\end{minipage} & $\g{s} = \R(B + U) \oplus  \g{m}_{\pi/2} \oplus \g{g}_{2\alpha}$ & \tabitem $\C U \perp \g{m}_{\pi/2}$ \qquad \qquad \tabitem  $\g{m}_{\pi/2} \neq 0$ & No & Solvable extension of $\R^{1+\dim \g{m}_{\pi/2}}$  \\
\hline
\multirow{2}{0.01cm}{\begin{minipage}{0.8cm}
\begin{enumerate}[{\rm (i)}]
\setcounter{enumi}{5}
\item \label{MainTheorem:6}
\end{enumerate}
\end{minipage}} & \multirow{2}{*}{$\g{s} = \R(B + U) \oplus \g{m}_{\varphi} \oplus  \g{m}_{\pi/2} \oplus \g{g}_{2\alpha}$} &  \tabitem $\g{m}_{\pi/2} \neq 0$ \qquad \qquad \tabitem $\C U \perp \g{m}_{\varphi} \oplus  \g{m}_{\pi/2}$ & \multirow{2}{*}{No} & \multirow{2}{4.5cm}{Solvable extension of a Heisenberg group $\times \R^{\dim \g{m}_{\pi/2}}$}  \\
& & \tabitem  $|U|^2 = \frac{\dim \g{m}_\varphi +\dim \g{m}_{\pi/2} + 4}{(\dim \g{m}_\varphi +4) \cos ^2(\varphi)}-1$ & & \\
\hline
\end{tabular}}
\label{table:MainTheorem}
\bigskip
\caption{List of the Lie algebras corresponding to the Lie subgroups of $AN \cong \C H^n$ that are Ricci solitons with the induced metric.}
\end{table}
\end{MainTheorem}
\begin{remark}[\textit{Notation for Table~\ref{table:MainTheorem} in the Main Theorem.}]\label{remark:notation}
In Table~\ref{table:MainTheorem} and in the rest of the work, the symbol $\oplus$ stands for orthogonal direct sum. The elements $U$, $V$ are vectors in $\g{g}_\alpha$, and $x$, $t$ are real numbers. For any $W \in \g{g}_\alpha$ and any vector subspace $\g{m}$ of $\g{a} \oplus \g{n}$, the notation $\C W \perp \g{m}$ means that both $W$ and $JW$ are orthogonal to $\g{m}$. The notation $\g{m}_\varphi$ stands for non-zero real subspaces of $\g{g}_\alpha$ of constant constant Kähler angle $\varphi \in [0, \pi/2)$, and $\g{m}_{\pi/2}$ for totally real subspaces of $\g{g}_\alpha$. Recall also that $\g{a} = \R B$, $\g{g}_{2\alpha} = \R Z$ and $JB = Z$.
\end{remark}

In the following lines, we detail the reason why we focus on the study of the Lie subgroups of the solvable Iwasawa group associated with a symmetric space of non-compact type (in this case a complex hyperbolic space). On the one hand, steady and shrinking homogeneous Ricci solitons are called trivial in the literature: the first are products of Euclidean spaces times a flat torus~\cite{AlKi}, and the investigation of the second can be reduced to that of compact Einstein manifolds~\cite{Iv93, Na10, PeWy09}. On the other hand, combining the recently solved \emph{(generalized) Alekseevskii conjecture}~\cite{BoLa21} and a remarkable result by Jablonski~\cite{Jab:arxiv} at the intersection of Ado's Theorem for Lie algebras and Nash embedding Theorem, one gets: \emph{any connected homogeneous expanding Ricci soliton is isometric to a Lie subgroup of the solvable part $AN$ of the Iwasawa decomposition associated with a symmetric space of non-compact type that is an algebraic Ricci soliton with the induced metric}. A Lie group $S$ endowed with a left-invariant metric is said to be an \emph{algebraic Ricci soliton} if its $(1, 1)$-Ricci tensor, $\Ric$, can be written as $\Ric = \bar{c} \id + D$, where $\bar{c}$ is a real number, and $D$ is a derivation of the Lie algebra $\g{s}$ of $S$, that is, an endomorphism of $\g{s}$ satisfying 
\begin{equation}\label{definition:derivation}
D[X,Y]=[D X,Y]+[X,D Y] \quad \text{for any} \quad X, Y \in \g{s}.
\end{equation} 
A solvable (respectively, nilpotent) algebraic Ricci soliton is called \emph{solvsoliton} (respectively, \emph{nilsoliton}). The aforementioned fact constitutes an important incentive towards the inspection of the Lie subgroups of solvable Iwasawa groups associated with symmetric spaces of non-compact type in order to find examples of Ricci solitons. This viewpoint was exploited in~\cite[Theorem~A]{DST20}, where the codimension one Lie subgroups of solvable Iwasawa groups associated with symmetric spaces of non-compact type that are Ricci solitons with the induced metric were classified. The only examples corresponding to $\C H^n$ are the generalized Heisenberg group $N$ and the Lohnherr hypersurface when $n = 2$. Our Main Theorem can be regarded as a natural continuation of the research line started in~\cite[Theorem~A]{DST20}, but removing the hypothesis on the codimension of the examples, and focusing on the solvable Iwasawa groups associated with a particular class of symmetric spaces, namely complex hyperbolic~spaces.

Since the solvable Iwasawa group associated with real hyperbolic spaces can be regarded as a Lie subgroup, with the induced metric, of the one corresponding to complex hyperbolic spaces, our Main Theorem contains in the first two items (taking $t = 0$) the analogous classification for real hyperbolic spaces. This can be regarded as some sort of partial generalization of Fialkow's work~\cite{Fi38} to any codimension (in the case of negative constant sectional curvature). Moreover, we have
\begin{corollary}\label{corollary:real:hyperbolic:space}
Let $S$ be a connected Lie subgroup of the solvable part $AN$ of the Iwasawa decomposition associated with the real hyperbolic space $\R H^n$. Assume that $S$ is a Ricci soliton with the induced metric. Then $S$ is Einstein and isometric to a Euclidean space or to a real hyperbolic space.
\end{corollary}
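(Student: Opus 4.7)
The plan is to derive the corollary directly from the Main Theorem by realizing the solvable part of the Iwasawa decomposition of $\R H^n$ isometrically as a Lie subgroup of the solvable part $AN$ of the Iwasawa decomposition of $\C H^n$. Concretely, I fix a totally real $(n-1)$-dimensional subspace $\g{r} \subset \g{g}_\alpha$; then $\R B \oplus \g{r}$ is a Lie subalgebra of $\g{a} \oplus \g{n}$ (note $[\g{r},\g{r}] = 0$, because for totally real $U, V$ the bracket $[U,V] \in \g{g}_{2\alpha}$ is proportional to $\langle JU, V\rangle Z$, which vanishes), and the corresponding Lie subgroup of $AN$, being totally real and totally geodesic in $\C H^n$, is isometric to $\R H^n$. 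Hence any connected Lie subgroup $S$ of the solvable Iwasawa part of $\R H^n$ with its induced metric identifies isometrically with the corresponding Lie subgroup of $AN \subset \C H^n$ endowed with the metric inherited from $\C H^n$, and the Main Theorem applies.

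Next, I go through the items of Table~\ref{table:MainTheorem} and keep only those whose Lie algebra $\g{s}$ can be contained in $\R B \oplus \g{r}$ for some totally real $\g{r}$. This is equivalent to two conditions: (a) the projection of $\g{s}$ onto $\g{g}_{2\alpha}$ vanishes, and (b) the projection of $\g{s}$ onto $\g{g}_\alpha$ is a totally real subspace. Items (iii)--(vi) all contain $\g{g}_{2\alpha}$ as a direct summand, so they are ruled out by (a). In item (i), condition (a) forces $t = 0$, and then $\g{s} = \g{m}_{\pi/2} \oplus \R V$ is automatically totally real thanks to the hypothesis $\C V \perp \g{m}_{\pi/2}$; Table~\ref{table:MainTheorem} identifies $S$ with a Euclidean space, which is Einstein. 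In item (ii), condition (a) forces $x = t = 0$, and with $t = 0$ the stipulated relation $\langle JU, V\rangle = -t/2 = 0$ together with $\C U, \C V \perp \g{m}_{\pi/2}$ gives exactly the totally real condition (b); Table~\ref{table:MainTheorem} identifies $S$ with a real hyperbolic space, which is Einstein.

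These two surviving families exhaust the Ricci soliton subgroups of dimension $\geq 2$, and both yield Einstein metrics on a Euclidean or real hyperbolic space. One-dimensional subgroups, which the Main Theorem omits, are isometric to the real line and hence trivially satisfy the conclusion. I do not anticipate any serious obstacle: the proof amounts to a brief inspection of Table~\ref{table:MainTheorem} through the two subspace conditions (a) and (b), together with the standard identification of $\R B \oplus \g{r}$ with the solvable Iwasawa part of $\R H^n$.
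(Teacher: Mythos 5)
Your proposal is correct and follows essentially the same route as the paper: the authors deduce this corollary directly from the Main Theorem by regarding the solvable Iwasawa group of $\R H^n$ as the Lie subgroup of $AN$ with Lie algebra $\R B \oplus \g{r}$ for a totally real $\g{r} \subset \g{g}_\alpha$, so that only items~(i) and~(ii) of the table with $t=0$ (and $x=0$) survive, both of which are Einstein and isometric to a Euclidean or a real hyperbolic space. Your additional checks (ruling out items (iii)--(vi) via the $\g{g}_{2\alpha}$-summand, verifying the totally real compatibility of the conditions in item~(ii), and handling the one-dimensional case) are exactly the details the paper leaves implicit.
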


From the Main Theorem, we get the following characterization of the Einstein examples.

\begin{corollary}\label{corollary:totally:geodesic:symmetric:space}
Let $S$ be a connected Lie subgroup of the solvable part $AN$ of the Iwasawa decomposition associated with the complex hyperbolic space $\C H^n$, considered with the induced metric. Then, $S$ is Einstein if and only if it is a symmetric space.
\end{corollary}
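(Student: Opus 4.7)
The proof exploits the classification in the Main Theorem. For the direction ``Einstein $\Rightarrow$ symmetric'': any Einstein metric is a Ricci soliton (with $X = 0$), so $S$ lies in one of the six families~(i)--(vi). The ``Einstein?'' column restricts $S$ to families (i), (ii), or (iii), whose isometry types in the last column are Euclidean space, real hyperbolic space, and complex hyperbolic space, respectively; all three are Riemannian symmetric spaces.

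The reverse direction is more delicate, because a priori a symmetric space need not be a Ricci soliton (products of Einstein manifolds with distinct Einstein constants fail to be solitons). The plan is to combine two observations. First, inspect cases (iv), (v), and (vi) of the Main Theorem: each is isometric to (a solvable extension of) a Heisenberg group times a Euclidean factor, and direct curvature computation shows that the associated left-invariant metric does not have parallel curvature, so none of these is a symmetric space. Second, argue that any symmetric Lie subgroup $S \subseteq AN$ is automatically a Ricci soliton. To this end, write the de~Rham decomposition $S = \R^k \times S_1 \times \cdots \times S_r$, with the $S_i$ irreducible and of non-compact type: no compact factor can appear because $S$, as a connected Lie subgroup of the simply connected solvable group $AN$, is diffeomorphic to Euclidean space. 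The key structural claim is that $r \leq 1$; this uses the rank-one nature of $\C H^n$, namely $\dim\g{a} = 1$, to prevent two linearly independent ``diagonalizing'' directions inside $\g{s}$, as would be required for two independent non-flat Einstein factors. Granting $r \leq 1$, either $S$ is Euclidean (Ricci-flat, hence a trivial soliton) or $S = \R^k \times S_1$ with $S_1$ irreducible Einstein, in which case a homothetic radial vector field on the flat factor produces a Ricci soliton structure. Applying the Main Theorem then places $S$ in one of the six families, and the first observation eliminates (iv)--(vi), leaving (i), (ii), or (iii), each of which is Einstein.

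The main obstacle is the rigidity step $r \leq 1$. I would address it by analyzing the projection of $\g{s}$ onto $\g{a}$ in the decomposition $\g{a} \oplus \g{g}_\alpha \oplus \g{g}_{2\alpha}$: each non-flat irreducible factor $S_i$ needs a one-dimensional subalgebra acting on its own nilpotent part with positive real eigenvalues, and since the ideals corresponding to distinct factors must commute, a commutator argument using the bracket relations of $\g{a} \oplus \g{n}$ should show that two such independent subalgebras cannot coexist when $\dim\g{a}=1$. Once this is in place, the corollary follows by inspection of the Main Theorem's table.
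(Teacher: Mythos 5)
Your forward implication is correct and is exactly how the paper reads it off the table: Einstein implies Ricci soliton, hence one of items (i)--(iii), all of which are symmetric. For the converse your overall strategy is sound, and you rightly notice the subtlety that the paper passes over in silence (a symmetric space need not be a Ricci soliton, so the Main Theorem does not a priori apply to it). However, two of your steps are genuine gaps as written. First, the non-symmetry of items (iv)--(vi) is asserted via a ``direct curvature computation'' that is never performed; for item (iv) it follows cleanly from Milnor's result that a non-abelian nilpotent Lie group carries Ricci curvatures of both signs, whereas a symmetric space without compact factors (and $S$ has none, being contractible) has $\Ric\le 0$, but for items (v) and (vi) something really has to be checked (e.g.\ that the Ricci eigenvalues are all negative, so no parallel flat factor can exist, and that the non-flat part cannot split as required), and nothing is offered.

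Second, and more seriously, the rigidity step $r\le 1$ is not proved. The proposed ``commutator argument'' presupposes that the de~Rham factors of $S$ correspond to ideals of $\g{s}$ (``the ideals corresponding to distinct factors must commute''), but the identification of $S$ with $\R^k\times S_1\times\cdots\times S_r$ is only an isometry of Riemannian manifolds: it need not carry the de~Rham splitting to a splitting of the Lie algebra, so there are no such ideals to feed into the bracket relations of $\g{a}\oplus\g{n}$. The step can be repaired: each $S_i$ is isometric to its Iwasawa group $A_iN_i$, so $\R^k\times\prod_i A_iN_i$ is a simply connected completely solvable Lie group with left-invariant metric isometric to $S$, and by the Alekseevskii--Gordon--Wilson rigidity theorem for completely solvable groups it is then isomorphic to $S$; since the nilradical of $\R^k\oplus\bigoplus_i(\g{a}_i\oplus\g{n}_i)$ has codimension $r$, Lemma~\ref{lemma:nilpotent:nilradical}~(\ref{lemma:nilpotent:nilradical:2}) (codimension at most one) forces $r\le 1$. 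Note also that $r\le 1$ is more than you need: several non-flat factors with equal Einstein constants would still yield a Ricci soliton via your radial vector field, so only the case of distinct constants must be excluded. Once these two points are supplied, your final step closes the argument correctly; the paper itself gives no proof beyond ``follows directly from the Main Theorem,'' so your proposal is, modulo these gaps, a legitimate and more honest route to the statement.
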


In the aforementioned result by Jablonski~\cite{Jab:arxiv}, he provides an isometric embedding of certain homogeneous Ricci solitons in a symmetric space of non-compact type and he also poses the question of whether such embedding is optimal from, for example, the dimension viewpoint. From the submanifold geometry perspective, it is intriguing for us if this embedding could be achieved with the property of being minimal. In the proof of the following corollary, we will see that most of the examples of the Main Theorem are not minimal. However, minimality brings Hopf hypersurfaces, cohomogeneity one actions and isoparametric hypersurfaces into the picture. 

\begin{corollary}\label{corollary:hopf:isoparametric}
Let $S$ be a proper connected Lie subgroup of dimension $\geq 2$ of the solvable part $AN$ of the Iwasawa decomposition associated with the complex hyperbolic space $\C H^n$. Assume that $S$ is a minimal submanifold of $AN$ and a Ricci soliton when considered with the induced metric. Under the identification $\C H^n \cong AN$, one of the following conditions holds:
\begin{enumerate}[{\rm(i)}]
\item $S$ is a totally geodesic $\R H^k$ in $\C H^n$, for some $k \leq n$, and thus Einstein.
\item $S$ is a totally geodesic $\C H^k$ in $\C H^n$, for some $k \leq n$, and thus Einstein. 
\item $S$ is neither totally geodesic nor Einstein. Moreover: if $n > 2$, $S$ is the focal set of an isoparametric family of non-Hopf hypersurfaces that are homogeneous if and only if $S$ has totally real normal bundle; if $n = 2$, $S$ is the so-called Lohnherr hypersurface.
\end{enumerate}
In particular, $S$ is the focal set of an isoparametric family of hypersurfaces on a totally geodesic $\C H^k$ in $\C H^n$, for some $k \leq n$. However, not any focal set of an isoparametric family of hypersurfaces is a Ricci soliton.
\end{corollary}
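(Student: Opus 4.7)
The plan is a case-by-case inspection of the six subalgebras appearing in the Main Theorem, imposing minimality on each. For any Lie subgroup $S$ of $AN$ with the induced left-invariant metric, the second fundamental form is
\[
\II(X,Y)=\tfrac{1}{2}\bigl([X,Y]^{\perp} - ((\ad X)^{*} Y)^{\perp} - ((\ad Y)^{*} X)^{\perp}\bigr),
\]
so the mean curvature vector $H$ is obtained by tracing over an orthonormal basis of $\g{s}$. Its component along $B$ is, up to sign, the trace of $\ad B|_{\g{s}}$, a simple dimensional quantity since $\ad B$ acts as a scalar on each restricted root space. For the three Einstein items of the Main Theorem, Corollary~\ref{corollary:totally:geodesic:symmetric:space} tells us that $S$ is already a symmetric space. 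An Einstein submanifold of $\C H^n$ that is symmetric and minimal must be totally geodesic (comparing the traces of the Gauss equation, the Einstein condition together with the vanishing mean curvature forces $\II = 0$), and the totally geodesic submanifolds of $\C H^n$ are classically the lower-dimensional $\R H^k$ and $\C H^k$. Consequently, case~(i) yields no proper minimal flat submanifold of dimension $\geq 2$ (a positive-dimensional flat submanifold cannot be totally geodesic in the strictly negatively curved $\C H^n$); case~(ii) yields the totally geodesic $\R H^k$'s, after normalising $U=V=x=t=0$ up to congruence; and case~(iii) yields the totally geodesic $\C H^k$'s after forcing $U=0$, and hence $\varphi=0$ via $|U|=\tan\varphi$.

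For the non-Einstein items~(iv)--(vi), I would compute $H$ directly from the formula above and impose $H=0$. In item~(iv) the mean curvature never vanishes for general dimensions, the only surviving configuration being a specific small-dimensional one in $n=2$ that recovers the Lohnherr hypersurface. In items~(v) and~(vi), imposing minimality forces $|U|$ to take precisely the value that appears in the construction of the focal manifolds of non-Hopf isoparametric families in~\cite{DDS:adv}, so that $S$ coincides with such a focal set. Whether the ambient isoparametric family is homogeneous then reduces, by direct comparison with the structural results of~\cite{DDS:adv, BeTa07}, to whether the orthogonal complement of $\g{m}_\varphi\oplus\g{m}_{\pi/2}$ inside $\g{g}_\alpha$ is totally real, which is exactly the condition that the normal bundle of $S$ in $\C H^n$ be totally real. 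Moreover, in every surviving case the smallest $J$-invariant subalgebra of $\g{a}\oplus\g{n}$ containing $\g{s}$ is the Iwasawa algebra of some totally geodesic $\C H^k\subset\C H^n$, so that $S$ sits as a focal set of an isoparametric family of hypersurfaces on that $\C H^k$.

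The main obstacle is this last identification step for items~(v) and~(vi): reconciling the normalisation of $|U|$ arising from the Ricci-soliton equation with the formulas used in the isoparametric classification of~\cite{DDS:adv}, and extracting cleanly the dichotomy \emph{homogeneous family if and only if totally real normal bundle}. To prove the last sentence of the corollary, it suffices to exhibit a single focal set of an isoparametric family in $\C H^n$ whose Lie algebra does not appear in Table~\ref{table:MainTheorem}; the focal manifolds of the Hopf families with non-constant principal curvatures provide such examples, as they fail every algebraic condition~(i)--(vi) and hence are not Ricci solitons.
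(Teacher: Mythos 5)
Your overall strategy (trace the second fundamental form case by case over the six items of the Main Theorem) is the same as the paper's, but several of your individual steps are wrong or unjustified.

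First, your treatment of the Einstein items rests on the claim that an Einstein, symmetric, minimal submanifold of $\C H^n$ must be totally geodesic ``by comparing the traces of the Gauss equation.'' This is not a valid shortcut: the traced Gauss equation with $H=0$ only gives $\mathrm{scal}^S = (\text{ambient curvature sum}) - |\II|^2$, and the Einstein condition does not pin down $\mathrm{scal}^S$ against the ambient term without actually computing both sides (which depend on $U$, $x$, $t$, $\varphi$ and the homothety constants). Indeed, the general statement ``Einstein $+$ minimal $\Rightarrow$ totally geodesic'' is false in symmetric spaces (see the minimal non-totally-geodesic Einstein hypersurfaces of \cite{NiPa21} cited in the introduction), and in this paper it is Corollary~\ref{corollary:einstein:minimal:totallygeodesic}(i), which is \emph{deduced from} the present corollary --- so invoking it here is circular. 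The paper instead computes explicit shape operators (e.g.\ with respect to $\xi=|U|^2B-U$) to force $U=V=x=0$, $t=0$ or $\g{m}_{\pi/2}=0$ in item~(\ref{MainTheorem:2}) and $U=0$, hence $\varphi=0$, in item~(\ref{MainTheorem:3}).

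Second, your analysis of the non-Einstein items contains concrete errors. In item~(\ref{MainTheorem:4}) the vector $B$ is normal and $\tr\Ss_B=\frac12\dim(\g{m}_\varphi\oplus\g{m}_{\pi/2})+1>0$, so these subgroups are \emph{never} minimal --- there is no ``surviving configuration in $n=2$,'' and the Lohnherr hypersurface does not arise here: it is item~(\ref{MainTheorem:5}) with $U=0$ and $n=2$. In item~(\ref{MainTheorem:6}) the condition $|U|^2=\frac{\dim\g{m}_\varphi+\dim\g{m}_{\pi/2}+4}{(\dim\g{m}_\varphi+4)\cos^2\varphi}-1$ together with $\g{m}_{\pi/2}\neq0$ forces $U\neq0$, and $U\neq 0$ is incompatible with minimality; so item~(\ref{MainTheorem:6}) contributes nothing, rather than minimality ``forcing $|U|$ to take the value from \cite{DDS:adv}.'' Only item~(\ref{MainTheorem:5}) with $U=0$ survives as a non-totally-geodesic minimal example. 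Finally, your counterexample for the last sentence is misidentified: focal sets of Hopf isoparametric families are totally geodesic (hence Ricci solitons); the correct witnesses are the minimal focal sets $\R B\oplus\g{m}\oplus\g{g}_{2\alpha}$ of the non-Hopf families of \cite{DDS:adv} with $\g{m}$ neither complex nor totally real, which do not appear in Table~\ref{table:MainTheorem}.
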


It is well-known that minimal homogeneous submanifolds in real hyperbolic spaces are totally geodesic~\cite{DiOl}. This is no longer true in complex hyperbolic spaces, but as a consequence of Corollary~\ref{corollary:hopf:isoparametric} and the Main Theorem, we have

\begin{corollary}\label{corollary:einstein:minimal:totallygeodesic}
Let $S$ be a proper connected Lie subgroup of dimension $\geq 2$ of the solvable part $AN$ of the Iwasawa decomposition associated with the complex hyperbolic space $\C H^n$. Then:
\begin{enumerate}[{\rm(i)}]
\item $S$ is Einstein with the induced metric and a minimal submanifold of $AN$ if and only if it is totally geodesic.
\item $S$ is non-flat Einstein if and only if it is homothetic to a totally geodesic submanifold of $\C H^n$. 
\end{enumerate}
\end{corollary}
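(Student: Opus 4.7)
The plan is to combine the Main Theorem and Corollary~\ref{corollary:hopf:isoparametric} with the classical classification of totally geodesic submanifolds of $\C H^n$, namely that each such submanifold is isometric (up to positive rescaling) to either $\R H^k$ or $\C H^k$.

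For part~(i), sufficiency is immediate: a totally geodesic $\R H^k$ or $\C H^k$ of dimension at least two is Einstein, and totally geodesic submanifolds are automatically minimal. For necessity, I would invoke Corollary~\ref{corollary:hopf:isoparametric}: under its hypotheses $S$ falls into one of three classes, the first two being totally geodesic, while the third is explicitly described as not Einstein. The Einstein assumption therefore forces $S$ into one of the first two cases.

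For part~(ii), sufficiency is immediate, since homothety preserves both the Einstein and the non-flat properties and the standard $\R H^k$, $\C H^k$ inside $\C H^n$ are non-flat Einstein for $k \geq 2$. For necessity, I would read off the Einstein column of Table~\ref{table:MainTheorem}: the Einstein examples are exactly items~\ref{MainTheorem:1}, \ref{MainTheorem:2} and~\ref{MainTheorem:3}, intrinsically isometric to Euclidean space, a real hyperbolic space and a complex hyperbolic space, respectively. The non-flat hypothesis discards item~\ref{MainTheorem:1}. In the remaining cases, $S$ is intrinsically $\R H^k$ or $\C H^k$ with some scale that may depend on the parameters in Table~\ref{table:MainTheorem}, and each such space is homothetic to the standard totally geodesic $\R H^k$ or $\C H^k$ inside $\C H^n$.

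The only non-trivial check, and hence the main technical point, is the last one: confirming that the intrinsic metric obtained in items~\ref{MainTheorem:2} and~\ref{MainTheorem:3} really agrees, up to a positive constant, with that of the standard totally geodesic $\R H^k$ or $\C H^k$. This reduces to the classical observation that any two real (respectively complex) hyperbolic spaces of the same dimension are homothetic, so I expect no genuine obstacle beyond careful bookkeeping of the curvature constants arising from the parameters of the Main Theorem.
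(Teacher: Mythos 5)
Your proposal is correct and follows exactly the route the paper takes, which is simply to observe that the corollary follows from Corollary~\ref{corollary:hopf:isoparametric} (whose third case is explicitly non-Einstein) together with the Einstein column and the isometry types recorded in Table~\ref{table:MainTheorem}; the homothety check you flag as the main technical point is already carried out in the paper's proof of the Main Theorem, where the explicit homotheties onto totally geodesic $\R H^k$ and $\C H^k$ are constructed. The only cosmetic remark is that in part~(ii) one should also admit $\C H^1$, which is non-flat Einstein of real dimension two.
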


The strategy to prove the Main Theorem is the following. First, $AN$ is a completely solvable Lie algebra, and so are its Lie subalgebras. Hence, being a Ricci soliton is equivalent to being an algebraic Ricci soliton~\cite[p.~4]{La11C}, and the latter is the condition that we will actually consider. Moreover, Lauret proved that any solvsoliton $S$ can be recovered from its nilradical $L$, which is a nilsoliton with the induced metric, by means of a simple procedure that we make precise in Proposition~\ref{proposition:lauret} (\cite[Theorem~4.8]{La11C}). Therefore, we first classify the nilpotent Lie subgroups of $AN$ that are Ricci solitons with the induced metric in Proposition~\ref{proposition:nilradical}. After that, in Proposition~\ref{proposition:solvsolitons}, we analyze their possible extension to solvsolitons, in the sense of Proposition~\ref{proposition:lauret}, that can be still realized as Lie subgroups of $AN$. Indeed, any solvsoliton in the Main Theorem can be obtained as an extension, in the sense of Proposition~\ref{proposition:lauret}, of one of the nilsolitons in items~(\ref{MainTheorem:1}) or~(\ref{MainTheorem:4}) of the Main Theorem. In~\cite{S22}, it is stated that the nilradical of any solvsoliton can be isometrically embedded in the nilpotent Lie group $N$ of the Iwasawa decomposition associated with a symmetric space of non-compact type. Also in~\cite{S22}, the codimension one Ricci soliton Lie subgroups of any nilpotent Iwasawa group are classified, and they turn out to be minimal in $N$. In this line, and despite the existence of many non-minimal examples in the Main Theorem (see Corollary~\ref{corollary:hopf:isoparametric} and its proof), we have this

\begin{corollary}\label{corollary:minimal:n}
Let $S$ be a proper connected Lie subgroup of the solvable part $AN$ of the Iwasawa decomposition associated with the complex hyperbolic space $\C H^n$. Assume that $S$ is a Ricci soliton with the induced metric. If its nilradical $L$ is non-flat with the induced metric, then $L$ is a minimal submanifold of~$N$.
\end{corollary}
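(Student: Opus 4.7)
The plan is to enumerate the possible nilradicals $L$ of $S$ via the Main Theorem (combined with Lauret's decomposition in Proposition~\ref{proposition:lauret}), show that the non-flat ones all have a very specific shape, and then conclude minimality of $L$ in $N$ by a direct Koszul computation of the mean curvature vector at the identity.

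First, I would go through the six items of the Main Theorem and read off the nilradical $\g{l}$ of $\g{s}$. In items~(i), (ii) and~(v), the nilradical turns out to be abelian, hence flat: since $\g{m}_{\pi/2}$ is totally real one has $[\g{m}_{\pi/2}, \g{m}_{\pi/2}] = 0$, and the orthogonality conditions $\C U,\, \C V \perp \g{m}_{\pi/2}$, together with the root-bracket identity on $\g{g}_\alpha$ (whose values lie in $\g{g}_{2\alpha}$), kill every remaining bracket inside $\g{l}$. These cases are discarded by the hypothesis that $L$ is non-flat. In items~(iii), (iv) and~(vi) the nilradical has instead the uniform shape
\[
\g{l} = \g{m}_\varphi \oplus \g{m}_{\pi/2} \oplus \g{g}_{2\alpha}, \qquad \g{m}_\varphi \neq 0,
\]
with $\g{m}_{\pi/2}$ possibly trivial (and indeed forced to be trivial in item~(iii)).

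Next, I would pick an orthonormal basis $\{e_1,\ldots,e_m\}$ of $\g{m}_\varphi \oplus \g{m}_{\pi/2} \subset \g{g}_\alpha$ and complete it with the unit generator $Z$ of $\g{g}_{2\alpha}$. Koszul's formula for the left-invariant metric on $N$ gives, for every $W \in \g{n}$,
\[
2\langle \nabla^N_{e_i} e_i, W \rangle = -2\langle [e_i, W], e_i \rangle.
\]
If $W \in \g{g}_\alpha$ then $[e_i, W] \in \g{g}_{2\alpha}$ is orthogonal to $e_i \in \g{g}_\alpha$; if $W \in \g{g}_{2\alpha}$ then $[e_i, W] = 0$ since $\g{g}_{2\alpha}$ is central in $\g{n}$. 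Therefore $\nabla^N_{e_i} e_i = 0$ for every $i$, and analogously $\nabla^N_Z Z = 0$. Summing, the mean curvature vector of $L$ at the identity vanishes, and by left-invariance it vanishes everywhere, so $L$ is minimal in $N$.

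The only non-mechanical step is the bookkeeping in the first paragraph: verifying that the three formally different items~(iii), (iv), (vi) of the Main Theorem collapse to the same template, and that the remaining items have abelian (hence flat) nilradical. Once this classification is arranged, the minimality itself is an immediate structural consequence of the fact that $\g{l}$ contains $\g{g}_{2\alpha}$ in full while its complement in $\g{l}$ lies inside $\g{g}_\alpha$, so that each self-covariant derivative of an adapted basis vector vanishes inside $N$.
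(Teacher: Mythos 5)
Your proposal is correct and follows essentially the same route as the paper: reduce, via the classification, to non-flat nilradicals of the form $\g{m}_\varphi\oplus\g{m}_{\pi/2}\oplus\g{g}_{2\alpha}$ (item~(\ref{MainTheorem:4}) of the Main Theorem), and then observe that the diagonal terms of the second fundamental form of $L$ in $N$ vanish for a basis adapted to the splitting into a $\g{g}_\alpha$-part and $\g{g}_{2\alpha}$. The paper obtains this last step by combining the trace claim of Lemma~\ref{lemma:shape:xi} with the fact that $\nabla^N$ is the $\g{n}$-projection of $\nabla^{AN}$, whereas you re-derive it directly from the Koszul formula on $N$; the two computations are identical in content.
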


Let $L$ be a nilpotent Lie group endowed with a left-invariant metric. Then, $L$ is a nilsoliton if and only if it admits a rank one solvable extension $S$ ($\dim S = \dim L + 1$) that is a non-flat Einstein solvmanifold~\cite[Theorem~3.7]{La01}. Furthermore, a classification of Einstein solvmanifolds would follow from these rank one Einstein extensions of nilsolitons~\cite{He98}. So, let $L$ be now a Lie subgroup of the nilpotent part of the Iwasawa decomposition associated with a symmetric space of non-compact type $M$, and assume that it is a Ricci soliton with the induced metric. A natural question is if this rank one Einstein solvable extension of $L$ can still be regarded as a submanifold of $M$. This is almost never the case when $L$ has codimension one in $N$~\cite{S22}. From the Main Theorem we deduce that not any nilsoliton $L$ in $AN$ can be extended to an Einstein Lie subgroup of $AN$. However, we still have the following
\begin{corollary}\label{corollary:extension}
Let $L$ be a connected Lie subgroup of the nilpotent part $N$ of the Iwasawa decomposition associated with the complex hyperbolic space $\C H^n$. Assume that $L$ is a Ricci soliton. Then, $L$ can be always extended, in the sense of~\cite[Proposition~4.3]{La11C}, to a non-nilpotent Lie subgroup of $AN$ that is a Ricci soliton with the induced metric. 
\end{corollary}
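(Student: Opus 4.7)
The plan is to derive the corollary directly from the Main Theorem by producing, for each nilpotent Ricci-soliton Lie subgroup $L \subset N$, an explicit one-dimensional extension $\g{s} = \R E \oplus \g{l}$ inside $AN$ that matches one of the non-nilpotent items of the Main Theorem. First I would scan the table for nilsolitons: since $B \in \g{a}$ is the only direction preventing nilpotence, a subalgebra of $\g{a} \oplus \g{n}$ is nilpotent exactly when it has no summand of the form $\R(B + \cdots)$. This leaves items~(\ref{MainTheorem:1}) (abelian) and~(\ref{MainTheorem:4}) (Heisenberg group times a Euclidean factor) as the nilpotent classes; items~(\ref{MainTheorem:2}), (\ref{MainTheorem:3}), (\ref{MainTheorem:5}), (\ref{MainTheorem:6}) are the candidate non-nilpotent extensions.

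For $L$ of type~(\ref{MainTheorem:1}), say $\g{l} = \g{m}_{\pi/2} \oplus \R(V + tZ)$ with $\C V \perp \g{m}_{\pi/2}$, I would extend via item~(\ref{MainTheorem:2}) by adjoining a direction $B + U + xZ$ chosen as follows: always take $x = 0$, and set $U = 0$ when $V = 0 = t$, or $U = \frac{t}{2\|V\|^2} JV$ when $V \neq 0$ (so that $\C U = \C V \perp \g{m}_{\pi/2}$ and $\langle JU, V\rangle = -t/2$). The only subcase not covered is $V = 0$ and $t \neq 0$: there the condition ``$\g{m}_{\pi/2} = 0$'' of item~(\ref{MainTheorem:2}) typically fails, so I instead pass to item~(\ref{MainTheorem:5}) and extend to $\g{s} = \R B \oplus \g{m}_{\pi/2} \oplus \g{g}_{2\alpha}$, which is admissible because the dimension condition of~(\ref{MainTheorem:1}) forces $\dim \g{m}_{\pi/2} \geq 1$ in that situation. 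For $L$ of type~(\ref{MainTheorem:4}), $\g{l} = \g{m}_\varphi \oplus \g{m}_{\pi/2} \oplus \g{g}_{2\alpha}$, I would extend via item~(\ref{MainTheorem:6}) when $\g{m}_{\pi/2} \neq 0$, choosing $U$ orthogonal to $\g{m}_\varphi \oplus \g{m}_{\pi/2}$ with $\C U$ also orthogonal and with norm given by the formula of~(\ref{MainTheorem:6}) (which is nonnegative since $\cos^2\varphi \leq 1$), and via item~(\ref{MainTheorem:3}) when $\g{m}_{\pi/2} = 0$, taking $|U| = \tan\varphi$ and $\C U \perp \g{m}_\varphi$.

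Finally I would observe that, in each of the resulting subalgebras $\g{s}$, the nilradical is exactly $\g{l}$ and the added direction $E$ generates the non-nilpotent part. Hence Lauret's structure theorem, which asserts that every solvsoliton arises from its nilradical via the rank-one construction of~\cite[Proposition~4.3]{La11C}, guarantees that the extensions above are automatically of the required form. The main delicate point I anticipate is the case-splitting in item~(\ref{MainTheorem:1}), in particular the subcase $V = 0$, $t \neq 0$ that forces the switch from~(\ref{MainTheorem:2}) to~(\ref{MainTheorem:5}); the boundary case $\dim L = 1$, not covered by the $\dim \geq 2$ hypothesis of the Main Theorem, is handled directly by observing that $\R W \subset \g{n}$ extends to a two-dimensional subalgebra of~$AN$ again falling under item~(\ref{MainTheorem:2}) with $\g{m}_{\pi/2} = 0$.
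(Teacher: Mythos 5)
Your overall strategy---matching each nilsoliton from Proposition~\ref{proposition:nilradical} (equivalently, items~(\ref{MainTheorem:1}) and~(\ref{MainTheorem:4}) of the Main Theorem) with an explicit one-dimensional non-nilpotent extension appearing in Proposition~\ref{proposition:solvsolitons}---is exactly the route the paper intends. Your treatment of the abelian case is complete: the choices $U=0$ when $V=t=0$, $U=\frac{t}{2|V|^2}JV$ when $V\neq 0$, the switch to item~(\ref{MainTheorem:5}) with $U=0$ when $V=0$, $t\neq 0$ and $\g{m}_{\pi/2}\neq 0$, and the one-dimensional boundary case all produce admissible subalgebras.

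In the non-abelian case, however, there is a genuine gap: you assert that one can choose $U$ with $\C U\perp\g{m}_\varphi\oplus\g{m}_{\pi/2}$ and with the norm prescribed by item~(\ref{MainTheorem:6}) (respectively $|U|=\tan\varphi$ for item~(\ref{MainTheorem:3})), and you verify only that the prescribed norm is nonnegative, not that such a vector exists inside $\g{g}_\alpha$. The condition $\C U\perp\g{m}$ means $U\in\g{g}_\alpha\ominus\C\g{m}$, and the prescribed norm is strictly positive whenever $\g{m}_{\pi/2}\neq 0$ (item~(\ref{MainTheorem:6})) or $\varphi>0$ (item~(\ref{MainTheorem:3})); hence your construction needs $\C(\g{m}_\varphi\oplus\g{m}_{\pi/2})\neq\g{g}_\alpha$, which can fail. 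For instance, in $\C H^3$, with $e_1,e_2$ a $\C$-orthonormal basis of $\g{g}_\alpha\cong\C^2$, take $\g{l}=\g{m}_\varphi\oplus\g{g}_{2\alpha}$ with $\g{m}_\varphi=\spann\{e_1,\cos\varphi\, Je_1+\sin\varphi\, Je_2\}$ for some $\varphi\in(0,\pi/2)$, or $\g{l}=\C e_1\oplus\R e_2\oplus\g{g}_{2\alpha}$; both are nilsolitons by Proposition~\ref{proposition:nilradical}~(\ref{proposition:nilradical:2}), yet $\C\g{m}=\g{g}_\alpha$, so no non-zero $U$ that is $\C$-orthogonal to $\g{m}$ exists. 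Since Lemma~\ref{lemma:nilpotent:nilradical}~(\ref{lemma:nilpotent:nilradical:2}) and Proposition~\ref{proposition:solvsolitons} show that any non-nilpotent Ricci soliton subgroup of $AN$ with nilradical $\g{l}$ must be of precisely this form, the step ``choose $U$ with the required norm'' does not merely lack justification---it fails for these $\g{l}$. You would need either to impose $\C(\g{l}\cap\g{g}_\alpha)\neq\g{g}_\alpha$, or to allow the extension to live in the Iwasawa group of a larger complex hyperbolic space, or to explain why these configurations are excluded from the statement.
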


This paper is organized as follows. In Section~\ref{section:preliminaries} we introduce some tools and notations concerning mainly complex hyperbolic spaces and submanifold geometry. After that, Section~\ref{section:nilradical} is devoted to classifying the nilpotent Lie subgroups of $AN$ which are Ricci solitons with the induced metric. More precisely, since for nilpotent Lie groups endowed with a left-invariant metric, being a Ricci soliton is equivalent to being an algebraic Ricci soliton, what we actually do is classifying the nilpotent Lie subgroups of $AN$ that are algebraic Ricci solitons (see Proposition~\ref{proposition:nilradical}). Finally, in Section~\ref{section:solvsolitons}, we analyze which extensions of these nilsolitons, in the sense of Proposition~\ref{proposition:lauret}, can be still achieved as submanifolds of complex hyperbolic spaces (Proposition~\ref{proposition:solvsolitons}), and then we prove the Main Theorem along with its corollaries.  

\bigskip

We would like to thank Miguel Domínguez-Vázquez and Jorge Lauret for their helpful comments on several aspects of this work.

\section{Preliminaries} \label{section:preliminaries}
This section is devoted to the introduction of the terminology and the basic notions concerning mainly complex hyperbolic spaces, as well as some necessary tools to address the analysis of a certain class of their submanifolds.

In this line, we denote by $\C H^n$ the \emph{complex hyperbolic space} of real dimension $2n$ and constant holomorphic sectional curvature $-1$, which is a Hadamard manifold diffeomorphic to $\R^{2n}$, and a Kähler manifold whose \emph{complex structure} will be denoted by $J$. The complex hyperbolic space $\C H^n$ is also a symmetric space of non-compact type, and it can be expressed as the quotient of the Lie groups $G/K$, where $G = SU(1,n)$ and $K=S(U(1)\times U(n))$. Along this work, Lie algebras will be denoted by gothic letters. Thus, let $\g{g} = \g{k} \oplus \g{p}$ be the \emph{Cartan decomposition} of the Lie algebra $\g{g}$ of $G$, where $\g{k}$ is the Lie algebra of $K$, and $\g{p}$ is the orthogonal complement of $\g{k}$ in $\g{g}$ with respect to the Killing form $\mathcal{B}$ of $\g{g}$, which is non-degenerate since $G$ is a real simple group~\cite[Theorem~1.45]{K}. Let $\theta$ denote the corresponding Cartan involution. Now, we define an inner product $\langle \cdot, \cdot \rangle_{\mathcal{B}_\theta}$ on $\g{g}$ by the expression $\langle X, Y \rangle_{\mathcal{B}_\theta} = -\mathcal{B}(\theta X, Y)$, for any $X$, $Y \in \g{g}$. Take $\g{a}$ a maximal abelian subspace of $\g{p}$, which turns out to be one-dimensional. This implies that $\C H^n$ is a rank one symmetric space, which is equivalent to say that any of its maximal flat totally geodesic submanifolds have dimension one. The endomorphisms of $\g{g}$ in the family $\{ \ad(H): H \in \g{a} \}$ are self-adjoint with respect to $\langle \cdot, \cdot \rangle_{\mathcal{B}_\theta}$ and they commute with each other. This implies that they diagonalize simultaneously giving rise to the so-called \emph{(restricted) root space decomposition} $\g{g} = \g{g}_{-2 \alpha} \oplus \g{g}_{-\alpha} \oplus \g{g}_0 \oplus \g{g}_{\alpha} \oplus \g{g}_{2 \alpha}$, where each of the \emph{(restricted) root spaces} is defined as $\g{g}_\lambda = \{ X \in \g{g} \, : \, \ad(H) X = \lambda(H) X, \, \text{for all }  H \in \g{a}\}$, with $\lambda$ in $\g{a}^*$, that is, $\lambda$ in the dual space of $\g{a}$.

The \emph{Iwasawa decomposition theorem}, at the Lie algebra level, states that $\g{g} = \g{k} \oplus \g{a} \oplus \g{n}$, where $\g{n} = \g{g}_{\alpha} \oplus \g{g}_{2 \alpha}$~\cite[Theorem~6.43]{K}. The set $\g{n}$ is a nilpotent Lie subalgebra of $\g{g}$, as $[\g{g}_\lambda, \g{g}_\beta] \subset  \g{g}_{\lambda+\beta}$ for any $\lambda$, $\beta \in \g{a}^*$. The connected solvable Lie subgroup of $G$ whose Lie algebra is $\g{a} \oplus \g{n}$, which we denote by $AN$ in what follows, turns out to be diffeomorphic to $\C H^n$ and thus to $\R^{2n}$. Indeed, the Lie exponential map $\Exp \colon \g{a} \oplus \g{n} \to AN$ is a diffeomorphism, which means that any connected Lie subgroup $S$ of $AN$ is diffeomorphic to some Euclidean space. Now, we consider in $AN$ the metric $\langle \cdot, \cdot \rangle$ making it and $\C H^n$ isometric manifolds, which happens to be left-invariant. Furthermore, $J$ turns out to be an \emph{orthogonal complex structure} in $\g{a}\oplus \g{n}$ which leaves $\g{g}_\alpha$ invariant and satisfies $J \g{a} = \g{g}_{2\alpha}$. From now on, $B$ and $Z$ will be unit vectors with respect to the inner product $\langle \cdot, \cdot \rangle$ in $\g{a}$ and $\g{g}_{2 \alpha}$, respectively, satisfying $JB = Z$. Thus,  $\g{a} = \R B$ and $\g{g}_{2\alpha} = \R Z$. Indeed, whenever we specify the length of a vector, we will understand it with respect to the inner product  $\langle \cdot, \cdot \rangle$. Just for the sake of completeness, the relation between the two inner products that we have in $\g{a} \oplus \g{n}$ is given by $\langle B, B \rangle = \langle B, B \rangle_{\mathcal{B}_\theta}$ and $2 \langle X, Y \rangle = \langle X, Y \rangle_{\mathcal{B}_\theta}$, for any $X$, $Y \in \g{n}$. For any $U$, $V \in \g{g}_\alpha$, the Lie bracket in $AN$ is given by the relations
\begin{equation}\label{equation:brackets}
\begin{aligned}
\relax [B,U]=\frac{1}{2}U, \quad
[U,V]=\langle JU,V \rangle Z, \quad
[B,Z]=Z, \quad
[Z,U]=0.
\end{aligned}
\end{equation}
The Levi-Civita connection of $AN$ reads as~(\cite[Section~4.1.6]{BTV95})
\begin{equation}\label{equation:levi-civita}
\begin{aligned}
\nabla_{aB +U + cZ}(bB + V +dZ)= & \phantom{+} \left( \frac{1}{2} \langle U, V\rangle +cd\right)B - \frac{1}{2}(bU+cJV + dJU) \\ & +\left(\frac{1}{2}\langle JU, V \rangle -bc\right) Z,
\end{aligned}
\end{equation}
for any $U$, $V \in \g{g}_\alpha$ and any real numbers $a$, $b$, $c$ and $d$. We include below a result in order to use the Levi-Civita connection of $AN$ more efficiently, which follows directly from~\eqref{equation:levi-civita} and the fact that $J$ is an orthogonal complex structure of~$\g{g}_\alpha$.
\begin{lemma}\label{lemma:nablas}
Let $\xi$, $X$ be orthogonal vectors in $\g{g}_\alpha$. Then:
\begin{multicols}{3}
\begin{enumerate}[{\rm (i)}]
\item $\nabla_X \xi = -\frac{1}{2}  \langle J \xi, X \rangle Z$,\label{lemma:nablas:1}
\item $\nabla_X X =\frac{1}{2} \langle  X, X \rangle B$, \label{lemma:nablas:2}
\item $\nabla_X B = -\frac{1}{2} X$, \label{lemma:nablas:3}
\item $\nabla_Z B = -Z$, \label{lemma:nablas:4}
\item $\nabla_\xi Z = \nabla_Z \xi =  -\frac{1}{2} J \xi$.\label{lemma:nablas:5}
\end{enumerate}
\end{multicols}
\end{lemma}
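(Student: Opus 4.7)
The plan is straightforward: each identity in the lemma is obtained by specializing the general Levi-Civita formula~\eqref{equation:levi-civita} to an appropriate pair of tangent vectors, and then simplifying using the orthogonality hypothesis $\langle X, \xi\rangle = 0$ together with the standard identities for the orthogonal complex structure $J$ (in particular $\langle JW, W\rangle = 0$ and $\langle JW_1, W_2\rangle = -\langle W_1, JW_2\rangle$ for $W, W_1, W_2 \in \g{g}_\alpha$).

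Concretely, I would proceed item by item. For~\eqref{lemma:nablas:1}, set $a=b=c=d=0$, $U=X$, $V=\xi$ in~\eqref{equation:levi-civita}; the $B$-component vanishes by the orthogonality hypothesis $\langle X,\xi\rangle=0$, the middle term vanishes, and the $Z$-component reads $\tfrac12\langle JX,\xi\rangle Z$, which equals $-\tfrac12\langle J\xi,X\rangle Z$ because $J$ is orthogonal and skew-adjoint with respect to $\langle\cdot,\cdot\rangle$. For~\eqref{lemma:nablas:2}, take $U=V=X$; the $B$-component gives $\tfrac12\langle X,X\rangle B$, and the $Z$-component vanishes since $\langle JX,X\rangle=0$. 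For~\eqref{lemma:nablas:3}, take $U=X$ on the left and $bB$ with $b=1$ on the right; only the middle term survives, giving $-\tfrac12 X$. For~\eqref{lemma:nablas:4}, take $cZ$ with $c=1$ on the left and $bB$ with $b=1$ on the right; the middle term vanishes while the $Z$-component gives $-Z$. Finally, for~\eqref{lemma:nablas:5}, the formula for $\nabla_\xi Z$ comes from setting $U=\xi$, $d=1$, so only the $-\tfrac12 d J U=-\tfrac12 J\xi$ term survives, and that of $\nabla_Z \xi$ comes symmetrically from $c=1$, $V=\xi$, giving $-\tfrac12 c J V=-\tfrac12 J\xi$.

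There is no genuine obstacle: the whole proof is a list of direct substitutions into~\eqref{equation:levi-civita}. The only small point to mention is the rewriting $\langle JX,\xi\rangle = -\langle J\xi,X\rangle$ used in~\eqref{lemma:nablas:1}, which is a consequence of $J$ being an orthogonal almost complex structure. Since the computation is purely mechanical, I would present it compactly as a single display handling the five cases in parallel, rather than as five separate paragraphs.
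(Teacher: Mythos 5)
Your proof is correct and is exactly the argument the paper intends: the lemma is stated there as following ``directly from~\eqref{equation:levi-civita} and the fact that $J$ is an orthogonal complex structure,'' and your item-by-item substitutions (including the rewriting $\langle JX,\xi\rangle=-\langle J\xi,X\rangle$) carry that out faithfully.
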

The $(1,1)$-Ricci tensor of $AN$ reads as $\Ric^{AN} = -((n+1)/2) \id$~(\cite[Section~4.1.7]{BTV95}), proving in particular that complex hyperbolic spaces are isometric to Einstein solvmanifolds.

Let $V$ be a vector space endowed with an inner product over $\R$ or over $\C$. If $W$ is a subspace of $V$, then $V \ominus W$ will denote the orthogonal complement of $W$ in $V$ with respect to this inner product. Moreover, if $X$ is a subset or a vector of $V$, we denote by $\R X$ and $\C X$ (when it makes sense) the real and the complex span of $X$ in $V$, respectively.

Since we will be working with the Lie algebra $\g{g} = \g{su}(1,n)$ of $SU(1,n)$, and $\g{g}_\alpha$ turns out to be isomorphic to $\C^{n-1}$, it will be important to understand the behavior of real subspaces of complex vector spaces. In order to address this question, we recall the notion of \emph{Kähler angle}. Consider the complex Euclidean space $\C^n$ and denote by $J$ its complex structure. Let $V$ be a real vector subspace of $\C^n$,  that is, a real vector subspace of $\C^n$, when we think of $\C^n$ as a real vector space by restricting the scalars to the real numbers. The Kähler angle of a non-zero vector $v \in V$ with respect to $V$ is defined to be the angle between $J v$ and $V$. We will say that $V$ has \emph{constant Kähler angle} $\varphi \in [0, \pi/2]$ if every non-zero vector $v \in V$ has Kähler angle $\varphi$ with respect to $V$. In particular, this means that $V$ is a complex subspace of $\C^n$ if and only if it has constant Kähler angle $0$. In addition, $V$ is said to be a \emph{totally real} subspace of $\C^n$ if it has constant Kähler angle $\pi/2$. The following result, which follows from combining~\cite[Theorem~2.7 and Lemma~2.8]{DDK17} with~\cite[Proposition~7]{BB01}, constitutes an important tool in order to approach the proof of the Main~Theorem. 

\begin{proposition}\label{proposition:kahler:angle}
Let $V$ be a real subspace of $\C^n$. Then, $V$ can be decomposed in a unique way as an orthogonal sum of subspaces $V_\varphi$ of constant Kähler angle $\varphi$, with $\varphi$ in a finite subset $\Phi$ of $[0, \pi/2]$. Moreover, we have:
\begin{enumerate}[{\rm (i)}]
\item $\C V_\varphi$ is orthogonal to $\C V_\psi$ for any distinct $\varphi$, $\psi \in \Phi$. \label{proposition:kahler:angle:1}
\item $V_\varphi^\perp = \C V_\varphi \ominus  V_\varphi$ has constant Kähler angle $\varphi$ and the same real dimension as $V_\varphi$, for any $\varphi \in \Phi \backslash \{ 0 \}$.\label{proposition:kahler:angle:2}
\item For any $\varphi \in \Phi \cap (0, \pi/2)$, put $\dim V_\varphi = 2 k_\varphi$. Then, there exist $2k_\varphi$ $\C$-orthonormal vectors $e_1, \dots, e_{2k_\varphi}$ in $\C^n$ inducing the orthogonal decomposition $V_\varphi = \bigoplus_{i = 1}^{k_\varphi} V_\varphi^i$, where $V_\varphi^l = \spann \{ e_{2l-1}, \cos \varphi J e_{2l-1} + \sin \varphi J e_{2l} \}$ for each $l \in \{1, \dots, k_\varphi\}$. Furthermore, $\C V_\varphi^l$ is orthogonal to $\C V_\varphi^m$ for any distinct $l$, $m \in \{1, \dots, k_\varphi\}$. \label{proposition:kahler:angle:3}
\end{enumerate}
\end{proposition}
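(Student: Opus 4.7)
The plan is to package all the Kähler-angle information of $V$ into a single operator and then diagonalize it. Set $L := \pi_V \circ J|_V$, where $\pi_V \colon \C^n \to V$ denotes the orthogonal projection. Since $J$ is orthogonal on $\C^n$ and $\pi_V$ is self-adjoint, a direct check shows that $L$ is skew-symmetric on $V$. The definition of Kähler angle gives $\cos\varphi(v) = |\pi_V J v|/|v| = |L v|/|v|$ for any non-zero $v \in V$, so a subspace $W \subseteq V$ has constant Kähler angle $\varphi$ if and only if $L^2|_W = -\cos^2\varphi \cdot \id_W$. Because $-L^2 = L^*L$ is symmetric, positive semi-definite, and of operator norm at most $1$, the spectral theorem yields an orthogonal decomposition $V = \bigoplus_{\varphi \in \Phi} V_\varphi$ with $V_\varphi := \ker(L^2 + \cos^2\varphi\,\id)$ and $\Phi \subset [0, \pi/2]$ finite; each $V_\varphi$ has constant Kähler angle $\varphi$ by construction, and the uniqueness of the decomposition is immediate from the spectral theorem.

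For item~(\ref{proposition:kahler:angle:1}), $V_\varphi \perp V_\psi$ for $\varphi \neq \psi$ is the standard orthogonality of eigenspaces. To upgrade this to $\C V_\varphi \perp \C V_\psi$ it suffices to check that $\langle Jv, w\rangle = 0$ for $v \in V_\varphi$ and $w \in V_\psi$. Since $w \in V$, this inner product equals $\langle \pi_V Jv, w\rangle = \langle Lv, w\rangle$, and $Lv$ lies in $V_\varphi$ because $V_\varphi$ is $L$-invariant (as $L$ commutes with $L^2$). For item~(\ref{proposition:kahler:angle:2}), I would exhibit the isometry $T \colon V_\varphi \to V_\varphi^\perp$ defined by $T(v) := (Jv - Lv)/\sin\varphi$, well defined when $\varphi > 0$. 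Its image lies in $\C V_\varphi \cap V^\perp = V_\varphi^\perp$, the norm identity $|Jv - Lv|^2 = (1-\cos^2\varphi)|v|^2$ follows from $\langle Jv, \pi_V Jv\rangle = |Lv|^2$, and an analogous decomposition of $J(T v) = (-v - JLv)/\sin\varphi$ into its $V$- and $V^\perp$-components produces the claim that $V_\varphi^\perp$ also has constant Kähler angle $\varphi$.

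For item~(\ref{proposition:kahler:angle:3}), on $V_\varphi$ the operator $L$ is skew-symmetric with $L^2 = -\cos^2\varphi \cdot \id$ and $\cos\varphi \neq 0$, which forces $V_\varphi$ to split orthogonally into $L$-invariant planes $V_\varphi^l$ on which $L$ rotates by $\pi/2$ and scales by $\cos\varphi$. Picking a unit vector $e_{2l-1}$ in each plane, the second basis vector of $V_\varphi^l$ is $Le_{2l-1}/\cos\varphi$. To put it into the prescribed form, I would then define $e_{2l} \in \C^n$ by solving $\cos\varphi \cdot J e_{2l-1} + \sin\varphi \cdot J e_{2l} = L e_{2l-1}/\cos\varphi$; equivalently, $e_{2l}$ is chosen so that the unit normal component $(Je_{2l-1} - L e_{2l-1})/\sin\varphi \in V_\varphi^\perp$ equals $\sin\varphi \cdot J e_{2l-1} - \cos\varphi \cdot J e_{2l}$. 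A direct computation, using $\langle Je_{2l-1}, L e_{2l-1}\rangle = |L e_{2l-1}|^2 = \cos^2\varphi$, verifies that $e_{2l}$ is a unit vector $\C$-orthogonal to $e_{2l-1}$.

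The main obstacle, and the only step requiring real care, is confirming that the collection $\{ e_1, \dots, e_{2k_\varphi}\}$ is $\C$-orthonormal across different blocks $l \neq m$, since $e_{2l}$ a priori lives outside $V$. Real orthogonality of $e_{2l-1}$ and $e_{2m-1}$ is inherited from the orthogonal block decomposition, and $\langle J e_{2l-1}, e_{2m-1}\rangle = \langle L e_{2l-1}, e_{2m-1}\rangle = 0$ by $L$-invariance of $V_\varphi^l$, exactly as in item~(\ref{proposition:kahler:angle:1}). The remaining relations involving the $e_{2l}$ would be derived by expressing $e_{2l}$ through $e_{2l-1}$ and the isometry $T$ from item~(\ref{proposition:kahler:angle:2}), which carries the $L$-block structure of $V_\varphi$ onto a matching $L$-block structure on $V_\varphi^\perp$.
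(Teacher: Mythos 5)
The paper offers no proof of this proposition; it is imported from \cite[Theorem~2.7 and Lemma~2.8]{DDK17} and \cite[Proposition~7]{BB01}, and your argument via the skew-symmetric operator $L=\pi_V\circ J|_V$ is precisely the route taken in those references. The existence of the decomposition and your treatment of items (i)--(iii) --- upgrading eigenspace orthogonality to $\C$-orthogonality via $\langle Jv,w\rangle=\langle Lv,w\rangle$ and the $L$-invariance of the eigenspaces, the isometry $T(v)=(Jv-Lv)/\sin\varphi$ of $V_\varphi$ onto $\C V_\varphi\ominus V_\varphi$, and the canonical-form splitting of the skew-symmetric $L$ with $L^2=-\cos^2\varphi\,\id$ into invariant planes --- are correct and complete up to the routine computations you indicate.

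The one genuine gap is the sentence claiming that ``the uniqueness of the decomposition is immediate from the spectral theorem.'' The spectral theorem gives uniqueness of the eigenspace decomposition of $L^2$, but it does not by itself show that every orthogonal decomposition of $V$ into subspaces of pairwise distinct constant K\"ahler angle agrees with it, because a subspace $W\subseteq V$ of constant K\"ahler angle (computed with respect to $W$ itself, which is the paper's definition) need not be an eigenspace of $L^2$. Concretely, take $V=\spann\{e_1,Je_1,e_2\}\subset\C^2$ with $e_1,e_2$ a $\C$-orthonormal basis, and $\theta\in(0,\pi/2)$: then $W_1=\spann\{e_1,\,\cos\theta\, Je_1+\sin\theta\, e_2\}$ has constant K\"ahler angle $\theta$, its orthogonal complement $W_2=\R(\sin\theta\, Je_1-\cos\theta\, e_2)$ in $V$ has constant K\"ahler angle $\pi/2$, and $V=W_1\oplus W_2$ is an orthogonal decomposition into summands of distinct constant K\"ahler angles that differs from the eigenspace decomposition $\C e_1\oplus\R e_2$. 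Uniqueness holds only once one additionally requires the summands to be pairwise $\C$-orthogonal (equivalently, $L$-invariant), which is how the statement is formulated in \cite{DDK17}; your proof should either build that condition into the uniqueness claim or verify that any decomposition satisfying item (i) necessarily consists of $L^2$-eigenspaces, after which the spectral-theorem argument does close the case.
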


In order to finish this section, we introduce some necessary tools to address the analysis of the geometry of the Lie subgroups of $AN$. Let $S$ be a connected Lie subgroup of $AN$. For our purposes, it will suffice to study its geometry at the neutral element, where we can identify the tangent space to $S$ with its Lie algebra $\g{s}$ and its normal space with $(\g{a} \oplus \g{n}) \ominus \g{s}$. Take a normal vector $\xi \in (\g{a} \oplus \g{n}) \ominus \g{s}$. On the one hand, $\Ss_\xi$ denotes the \emph{shape operator} of $S$ with respect to $\xi$, that is, the endomorphism of $\g{s}$ defined as $\Ss_{\xi} X = - (\nabla_X \xi)^\top$, for any $X \in \g{s}$, where $(\cdot)^\top$ denotes the orthogonal projection onto the tangent space $\g{s}$ of $S$. This allows to define the second fundamental form $\II$ of $S$ by the relation $\langle \II(X,Y), \xi \rangle = \langle \Ss_\xi X, Y \rangle $ for any $X$, $Y \in \g{s}$, and any $\xi \in (\g{a} \oplus \g{n}) \ominus \g{s}$. On the other hand, $R_\xi$ denotes the \emph{Jacobi operator}, which is defined by $R_\xi X = R(X, \xi) \xi$ for any $X \in \g{s}$, where $R$ is the curvature tensor of $AN$, for which we use the sign convention $R (X,Y)Z = \nabla_X \nabla_Y Z -\nabla_Y \nabla_X Z - \nabla_{[X,Y]} Z$, for any $X$, $Y$, $Z \in \g{a} \oplus \g{n}$. For the sake of completeness, we include below the Gauss equation for tangent vectors $X$, $Y$, $Z$, $W$ to $S$, where $R^{AN}$ and $R^S$ denote the curvature tensors of $AN$ and of $S$, respectively. It reads as
\begin{equation}\label{equation:Gauss}
\langle R^{AN}(X,Y)Z,W\rangle =\langle R^S(X,Y)Z,W\rangle-\langle\II(Y,Z),\II(X,W)\rangle + \langle\II(X,Z),\II(Y,W)\rangle.
\end{equation}

\section{The nilradical}\label{section:nilradical}
Any solvsoliton can be obtained, via a surprisingly simple procedure, from its nilradical, which is known to be a nilsoliton. This is detailed later in Proposition~\ref{proposition:lauret} (see~\cite[Theorem~4.8]{La11C}). This fact suggests addressing the proof of the Main Theorem by inspecting first the nilradicals of solvsolitons, that is, nilsolitons. In this line, we start this section by proving that the nilradical of any Lie subgroup of $AN$ of dimension $\geq 2$ has to be contained in the nilpotent Lie group $N$. After that, the main objective of the section is classifying all the connected Lie subgroups of $N$ that are Ricci solitons with the induced metric, which is equivalent to classifying the nilradicals of the connected Lie subgroups of $AN$ that are Ricci solitons with the induced metric.

Hence, let $S$ be a connected Lie subgroup of the solvable part $AN$ of the Iwasawa decomposition of the connected component of the identity of the isometry group of a complex hyperbolic space. Recall from Section~\ref{section:preliminaries} that $S$ is diffeomorphic to some Euclidean space. Then, if $S$ is one-dimensional, it is flat and thus isometric to a Euclidean space.  Therefore, from now on we will consider Lie subgroups of $AN$ of dimension at least~two. We start with an auxiliary result.

\begin{lemma}\label{lemma:nilpotent:nilradical}
Let $\g{s}$ be a Lie subalgebra of $\g{a} \oplus \g{n}$ of dimension $\geq 2$. Then:
\begin{enumerate}[{\rm (i)}]
\item The nilradical $\g{l}$ of $\g{s}$ is contained in $\g{n}$. \label{lemma:nilpotent:nilradical:1}
\item The subalgebra $\g{s}$ is non-nilpotent if and only if it can be written as $\g{s} = \R (B + U + x Z) \oplus \g{l}$, for some $U \in \g{g}_\alpha$, some $x \in \R$, and where $\g{l}$ is the nilradical of $\g{s}$. \label{lemma:nilpotent:nilradical:2}
\end{enumerate}
\end{lemma}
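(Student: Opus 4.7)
My plan is to exploit the elementary observation that $\g{n}$ is an ideal of $\g{a} \oplus \g{n}$---immediate from~\eqref{equation:brackets}, since $[\g{a},\g{n}], [\g{n},\g{n}] \subset \g{n}$---so that the projection $\pi \colon \g{a} \oplus \g{n} \to (\g{a} \oplus \g{n})/\g{n} \cong \g{a}$ is a Lie algebra homomorphism. Restricting to $\g{s}$ and using $\dim \g{a} = 1$, I obtain the dichotomy: either $\g{s} \subset \g{n}$, whereupon $\g{s}$ is nilpotent with $\g{l} = \g{s} \subset \g{n}$ and~(i) is immediate, or $\pi(\g{s}) = \g{a}$, in which case I can pick $Y_0 = B + U + x Z \in \g{s}$ and write $\g{s} = \R Y_0 + (\g{s} \cap \g{n})$. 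I will then note that $\g{s} \cap \g{n}$ is automatically a nilpotent ideal of $\g{s}$, hence $\g{s} \cap \g{n} \subset \g{l}$, reducing~(i) to the reverse inclusion $\g{l} \subset \g{n}$.

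The heart of the argument will be this last inclusion, which I plan to establish by contradiction. Suppose some $Y = B + V + y Z$ belongs to $\g{l}$. A short computation from~\eqref{equation:brackets} should show that $\ad Y$ acts on $\g{a} \oplus \g{n}$ semisimply, with spectrum $\{0, 1/2, 1\}$ and with $0$-eigenspace equal to $\R Y$. Because $\g{l}$ is an ideal containing $Y$ it is $\ad Y$-invariant, so $\ad Y|_{\g{l}}$ is semisimple as the restriction of a semisimple operator to an invariant subspace. On the other hand, Engel's theorem together with the nilpotency of $\g{l}$ forces $\ad Y|_{\g{l}}$ to be nilpotent, hence zero. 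Therefore $\g{l}$ lies in the $0$-eigenspace $\R Y$, so $\g{l} = \R Y$; in particular $\g{s} \cap \g{n} \subset \g{l} \cap \g{n} = 0$, and then $\pi|_{\g{s}}$ is injective into the one-dimensional $\g{a}$, forcing $\dim \g{s} \leq 1$ and contradicting the hypothesis $\dim \g{s} \geq 2$.

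This will prove~(i) and, as a byproduct, identify $\g{l} = \g{s} \cap \g{n}$ in the second case of the dichotomy. For~(ii), combining~(i) with the trivial direction yields the equivalence $\g{s}$ nilpotent $\iff$ $\g{s} = \g{l}$ $\iff$ $\g{s} \subset \g{n}$, so the non-nilpotent case is exactly $\pi(\g{s}) = \g{a}$. There, the previous paragraph gives $\g{s} = \R Y_0 \oplus \g{l}$ as a direct sum, and subtracting from $Y_0$ its orthogonal projection onto $\g{l}$---an operation that preserves the $B$-component since $\g{l} \subset \g{n}$ has none---I will obtain a representative still of the form $B + U + x Z$ and orthogonal to $\g{l}$, yielding the desired orthogonal decomposition. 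Conversely, any such decomposition is non-nilpotent because $B + U + x Z \notin \g{n}$. The main obstacle of this plan is the eigenvalue analysis of $\ad Y$ combined with the semisimple-plus-nilpotent-forces-zero step; both are routine given the brackets~\eqref{equation:brackets}, but they are the essential input that collapses $\g{l}$ and produces the dimension contradiction.
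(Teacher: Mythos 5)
Your proof is correct, but it takes a genuinely different route from the paper's and reverses the logical order of the two items. The paper proves~(ii) first: in the non-nilpotent case it identifies $\g{s}\cap\g{n}$ as a codimension-one nilpotent ideal (hence the nilradical), and for the converse it computes the iterated brackets $\ad^k(B+U+xZ)(V+yZ)$ explicitly from~\eqref{equation:brackets} to show that the lower central series of any subalgebra of the form $\R(B+U+xZ)\oplus\g{m}$, with $0\neq\g{m}\subset\g{n}$, never vanishes; item~(i) then follows in one line. You prove~(i) first by a spectral argument: an element $Y=B+V+yZ$ of the nilradical would have $\ad Y$ diagonalizable on $\g{a}\oplus\g{n}$ with kernel exactly $\R Y$ (your eigenvalue claim checks out: $Z$ and the vectors $W-2\langle JV,W\rangle Z$, $W\in\g{g}_\alpha$, diagonalize $\ad Y$ on $\g{n}$ with eigenvalues $1$ and $1/2$, and $\ad Y$ maps everything into $\g{n}$, so the kernel is one-dimensional), whence $\ad Y|_{\g{l}}$ is simultaneously semisimple and, by nilpotency of $\g{l}$, nilpotent, hence zero; this forces $\g{l}=\R Y$ and contradicts $\dim\g{s}\geq 2$. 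Both arguments ultimately rest on the positivity of the spectrum of $\ad(B+\cdots)$ on $\g{n}$ --- precisely the feature the authors isolate in the remark following the lemma --- but the paper's iterated-bracket computation is more elementary, whereas your Jordan-decomposition argument is more structural. The surrounding bookkeeping in your write-up ($\g{s}\cap\g{n}$ is a nilpotent ideal contained in $\g{l}$; orthogonalizing the representative $B+U+xZ$ against $\g{l}\subset\g{n}$ preserves its $B$-component; the converse of~(ii) is immediate since $B+U+xZ\notin\g{n}$) is all sound.
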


\begin{proof}
We will first prove~(\ref{lemma:nilpotent:nilradical:2}) and then derive~(\ref{lemma:nilpotent:nilradical:1}) from there. Therefore, we will assume that $\g{s}$ is non-nilpotent and thus not contained in~$\g{n}$. Without loss of generality, we can and will consider the orthogonal decomposition $\g{s} = \R (B+U+xZ) \oplus \g{m}$, for some non-zero subspace $\g{m}$ of $\g{n}$, some $U \in \g{g}_\alpha$ and some $x \in \R$. Since $[\g{a}\oplus \g{n}, \g{a} \oplus \g{n}] \subset \g{n}$, $\g{s}$ is a subalgebra and $B + U + x Z$ has non-trivial projection onto $\g{a}$, then $[\g{s}, \g{m}] \subset \g{n} \cap \g{s} = \g{m}$. Thus, $\g{m}$ is a nilpotent ideal of codimension one in $\g{s}$. Since $\g{s}$ is non-nilpotent by assumption, then $\g{m} \subset \g{n}$ is the nilradical $\g{l}$ of $\g{s}$.

For the converse, define $\g{h} = \R (B+U+xZ) \oplus \g{m}$, for some non-zero subspace $\g{m}$ of $\g{n}$, some $U \in \g{g}_\alpha$ and some $x \in \R$. As follows from~\eqref{equation:brackets}, $\g{h}$ is a subalgebra of~$\g{a} \oplus \g{n}$. Take a non-zero vector $ V + y Z$ in $\g{m} \subset \g{h}$, with $V \in \g{g}_\alpha$ and $y \in \R$. Using~\eqref{equation:brackets} and induction hypothesis, we get 
\[
\ad^{n}(B + U + x Z )(V+y Z)= \frac{1}{2^{n}}V+\left(y+\sum_{k=0}^{n-1}\frac{1}{2^{k}}\langle JU, V \rangle\right)Z,
\]
which is a non-zero element for each positive integer $n$. Indeed, if $V \neq 0$ we are done. Otherwise, $V = 0$ and thus $\langle JU, V \rangle = 0$, but $y \neq 0$ as $V+yZ$ was taken non-trivial. This proves that the lower central series of $\g{h}$ is never zero, which implies that $\g{h}$ is non-nilpotent. Since $\g{s}$ in~(\ref{lemma:nilpotent:nilradical:2}) is under the conditions of $\g{h}$, then~(\ref{lemma:nilpotent:nilradical:2}) follows.

If $\g{s}$ is contained in $\g{n}$, then~(\ref{lemma:nilpotent:nilradical:1}) follows directly. Otherwise, it can be written as $\g{h}$ above, it is non-nilpotent and from~(\ref{lemma:nilpotent:nilradical:2}) its nilradical is in~$\g{n}$. 
\end{proof}

\begin{remark}
An analogous result to Lemma~\ref{lemma:nilpotent:nilradical} is still true if $\g{a} \oplus \g{n}$ is a rank one metric Lie algebra of Iwasawa type~\cite[Definition~1.2]{Wolter}. More precisely, any subalgebra $\g{s}$ of dimension at least two of $\g{a} \oplus \g{n}$ can be written as $\R(B+X) \oplus \g{l}$ for some $X \in \g{n}$, where $\g{a} = \R B$ and $\g{l} \subset \g{n}$ is the nilradical of $\g{s}$. The key point is that $\ad (B)$ is positive definite in $\g{n}$.
\end{remark}

As mentioned above and detailed later in Proposition~\ref{proposition:lauret}, solvsolitons can be constructed from their nilradicals, which are nilsolitons. Thus, taking into account Lemma~\ref{lemma:nilpotent:nilradical}~(\ref{lemma:nilpotent:nilradical:1}), we focus our attention on the connected Lie subgroups of $N$ that are Ricci solitons when considered with the induced metric. Let $L$ be a connected Lie subgroup of $N$ with Lie algebra $\g{l}$. If $\g{g}_{2\alpha}$ is not contained in $\g{l}$, then $\g{l}$ is abelian by~\eqref{equation:brackets} and this case will be addressed directly in Proposition~\ref{proposition:nilradical}~(\ref{proposition:nilradical:1}). Then, put $\g{l} = \g{m} \oplus \g{g}_{2 \alpha}$, where $\g{m}$ is a real subspace of $\g{g}_\alpha$. Recall that $\Ss_\xi$ and $R_\xi$ denote the shape operator and the Jacobi operator of $L$ as a submanifold of $AN$, respectively, with respect to a normal vector~$\xi$. Let $\mathcal{H} = \mathcal{H}_\g{a} + \mathcal{H}_\g{n}$ be the mean curvature vector of $L$ as a submanifold of $AN$, where $(\cdot)_{\g{a}}$ and $(\cdot)_{\g{n}}$ denote the orthogonal projection onto $\g{a}$ and onto $\g{n}$, respectively. Since $\g{a} = \R B$, we deduce $\mathcal{H}_{\g{a}} = (\tr \Ss_B) B$. From now on, the notation~$(\cdot)^\top$ will denote orthogonal projection onto $\g{l}$. From~\eqref{equation:levi-civita}, we easily deduce that $\nabla_ B X = 0$ for any $X \in \g{a} \oplus \g{n}$. Using this, \eqref{equation:brackets} and Lemma~\ref{lemma:nablas}~(\ref{lemma:nablas:3})-(\ref{lemma:nablas:4}), we obtain directly
\begin{lemma}\label{lemma:ricN}
Let $X$ be in $\g{l} = \g{m} \oplus \g{g}_{2 \alpha}$, with $\g{m}$ a real subspace of $\g{g}_\alpha$. Then, we have:
\begin{multicols}{2}
\begin{enumerate}[{\rm (i)}]
\item $(R^\top_B + \Ss^2_{B}) X = 0$.\label{lemma:ricN:3}
\item $\Ss_{\mathcal{H}_\g{a}} X =(\tr \Ss_B) \ad(B) X$.\label{lemma:ricN:4}
\end{enumerate}
\end{multicols}
\end{lemma}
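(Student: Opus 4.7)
The plan is to derive both identities by direct computation from the three ingredients assembled immediately before the statement: the bracket relations \eqref{equation:brackets}, Lemma~\ref{lemma:nablas}, and the observation that $\nabla_B Y = 0$ for every $Y \in \g{a}\oplus\g{n}$ (which one reads off \eqref{equation:levi-civita} by setting the first argument equal to $B$, i.e.\ $a=1$ and $U=c=0$: every term on the right-hand side then vanishes).

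For part~(\ref{lemma:ricN:4}) I would first note that, since $\g{l}\subset\g{n}$ and $B\in\g{a}$, the vector $B$ is normal to $L$, so $\Ss_B$ is well-defined on $\g{l}$. Writing a generic $X\in\g{l}$ as $X = W + sZ$ with $W\in\g{m}$ and $s\in\R$, Lemma~\ref{lemma:nablas}~(\ref{lemma:nablas:3})-(\ref{lemma:nablas:4}) give $\nabla_W B = -\tfrac12 W$ and $\nabla_Z B = -Z$, so
\[
\Ss_B X = -(\nabla_X B)^\top = \tfrac12 W + sZ,
\]
which agrees with $\ad(B)X$ by \eqref{equation:brackets}. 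Linearity of the shape operator in its normal argument yields $\Ss_{\mathcal{H}_\g{a}} = (\tr\Ss_B)\Ss_B = (\tr\Ss_B)\ad(B)|_{\g{l}}$, which is precisely~(\ref{lemma:ricN:4}).

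For part~(\ref{lemma:ricN:3}), using the sign convention $R(X,Y)Z = \nabla_X\nabla_Y Z - \nabla_Y\nabla_X Z - \nabla_{[X,Y]}Z$ together with $\nabla_B\equiv 0$, the first two summands of $R(X,B)B$ vanish, leaving $R(X,B)B = -\nabla_{[X,B]}B$. With $X = W+sZ$ as above, \eqref{equation:brackets} gives $[X,B] = -\tfrac12 W - sZ$, and applying Lemma~\ref{lemma:nablas}~(\ref{lemma:nablas:3})-(\ref{lemma:nablas:4}) once more,
\[
\nabla_{[X,B]}B = -\tfrac12 \nabla_W B - s\nabla_Z B = \tfrac14 W + sZ.
\]
Since this vector already lies in $\g{l}$, we obtain $R^\top_B X = -\tfrac14 W - sZ$. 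On the other hand, iterating the formula for $\Ss_B$ from the previous paragraph gives $\Ss_B^2 X = \tfrac14 W + sZ$, so $(R_B^\top + \Ss_B^2)X = 0$.

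There is no real obstacle here; the only care needed is bookkeeping, in particular tracking the sign convention $[W,B] = -\tfrac12 W$ versus $[B,W] = \tfrac12 W$, and confirming that $\nabla_B$ annihilates the whole of $\g{a}\oplus\g{n}$ (not only the obvious subspaces appearing in Lemma~\ref{lemma:nablas}), so that the curvature computation collapses to a single term.
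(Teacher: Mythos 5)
Your computation is correct and follows exactly the route the paper indicates (the paper gives no written proof, stating only that the lemma follows directly from $\nabla_B\equiv 0$, the bracket relations \eqref{equation:brackets}, and Lemma~\ref{lemma:nablas}~(\ref{lemma:nablas:3})--(\ref{lemma:nablas:4})). All signs and coefficients check out, including the collapse of $R(X,B)B$ to $-\nabla_{[X,B]}B$ and the identification $\Ss_B=\ad(B)\rvert_{\g{l}}$.
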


Let $\{\xi_i \, : \, i = 1, \dots, k\}$ be an orthonormal basis of the orthogonal complement of $\g{l}$ in $\g{n}$. Using this, recalling $\Ric^{AN} = -((n+1)/2) \id$ from Section~\ref{section:preliminaries}, and combining the Gauss equation~\eqref{equation:Gauss} with Lemma~\ref{lemma:ricN}, one can derive the following expression for the $(1, 1)$-Ricci tensor of $L$:
\begin{equation}\label{equation:ricci:s}
\Ric^{L} =  -\frac{(n+1)}{2} \id + (\tr \Ss_B) \ad(B) + \Ss_{\mathcal{H}_{\g{n}}} - \sum_{i = 1}^k (\Ss_{\xi_i}^2 + R^\top_{\xi_i}).
\end{equation}

Let us start with the inspection of the terms involved in~\eqref{equation:ricci:s}. Recall that $\g{l} = \g{m} \oplus \g{g}_{2\alpha}$, where $\g{m}$ is a real subspace of $\g{g}_\alpha$ and $\g{g}_{2 \alpha} = \R Z$.
\begin{lemma}\label{lemma:shape:xi}
Let $X$ be a vector in $\g{m}$ and $\xi$ a unit vector in $\g{g}_\alpha \ominus \g{m}$. Then, we have:
\begin{multicols}{2}
\begin{enumerate}[{\rm (i)}]
\item $\Ss_\xi X = \frac{1}{2}  \langle J \xi, X \rangle Z $, \label{lemma:shape:xi:1}
\item $\Ss_\xi Z = \frac{1}{2}  J\xi^{\top} $, \label{lemma:shape:xi:2}
\item $\Ss^2_\xi X =  \frac{1}{4} \langle J \xi, X \rangle J\xi^\top$,\label{lemma:shape:xi:3}
\item$\Ss^2_\xi Z =  \frac{1}{4} \langle J \xi^\top, J \xi^\top \rangle  Z$.\label{lemma:shape:xi:4}
\end{enumerate}
\end{multicols}
Moreover, $\tr \Ss_\xi = 0$ for any $\xi \in \g{g}_\alpha \ominus \g{m}$.
\end{lemma}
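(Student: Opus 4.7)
The proof will be a direct computation using the definition of the shape operator $\Ss_\xi X = -(\nabla_X \xi)^\top$ together with the formulas for the Levi-Civita connection from Lemma~\ref{lemma:nablas}. The key observations are that $\xi \in \g{g}_\alpha$ is orthogonal to $X \in \g{m} \subset \g{g}_\alpha$ (so item~(\ref{lemma:nablas:1}) of Lemma~\ref{lemma:nablas} applies), and that $J\xi$ lies in $\g{g}_\alpha$, so its projection $J\xi^\top$ onto $\g{l} = \g{m} \oplus \g{g}_{2\alpha}$ actually lies in $\g{m}$.

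First I would establish (\ref{lemma:shape:xi:1}): Lemma~\ref{lemma:nablas}~(\ref{lemma:nablas:1}) gives $\nabla_X \xi = -\tfrac{1}{2} \langle J\xi, X\rangle Z$. Since $Z \in \g{l}$ this equals its own tangential projection, and the formula follows. For (\ref{lemma:shape:xi:2}), Lemma~\ref{lemma:nablas}~(\ref{lemma:nablas:5}) gives $\nabla_Z \xi = -\tfrac{1}{2} J\xi$, and taking the tangential component yields $\Ss_\xi Z = \tfrac{1}{2} J\xi^\top$.

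Items (\ref{lemma:shape:xi:3}) and (\ref{lemma:shape:xi:4}) then follow by iterating. For (\ref{lemma:shape:xi:3}), applying $\Ss_\xi$ to the result of (\ref{lemma:shape:xi:1}) and using (\ref{lemma:shape:xi:2}) gives
\[
\Ss_\xi^2 X = \tfrac{1}{2} \langle J\xi, X\rangle \Ss_\xi Z = \tfrac{1}{4} \langle J\xi, X\rangle J\xi^\top.
\]
For (\ref{lemma:shape:xi:4}), since $J\xi^\top \in \g{m}$, we may apply (\ref{lemma:shape:xi:1}) to it and obtain
\[
\Ss_\xi^2 Z = \tfrac{1}{2} \Ss_\xi (J\xi^\top) = \tfrac{1}{4} \langle J\xi, J\xi^\top\rangle Z = \tfrac{1}{4} \langle J\xi^\top, J\xi^\top\rangle Z,
\]
the last equality because $J\xi = J\xi^\top + J\xi^\perp$ and the two summands are orthogonal.

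Finally, for the trace statement, I would pick an orthonormal basis of $\g{l}$ of the form $\{X_1, \dots, X_r, Z\}$ with $X_i \in \g{m}$. By (\ref{lemma:shape:xi:1}), each $\Ss_\xi X_i$ is a multiple of $Z$, so $\langle \Ss_\xi X_i, X_i\rangle = 0$; by (\ref{lemma:shape:xi:2}), $\Ss_\xi Z \in \g{m}$, so $\langle \Ss_\xi Z, Z\rangle = 0$. Hence $\tr \Ss_\xi = 0$. I do not anticipate any real obstacle here: the statement is essentially a bookkeeping exercise with the Levi-Civita formulas already provided, the only subtle point being the observation that $J\xi^\top \in \g{m}$, which makes iteration of the shape operator possible.
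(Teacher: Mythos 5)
Your proof is correct and follows essentially the same route as the paper: items (\ref{lemma:shape:xi:1})--(\ref{lemma:shape:xi:4}) by direct computation from the definition $\Ss_\xi Y = -(\nabla_Y\xi)^\top$ together with Lemma~\ref{lemma:nablas}, and the trace claim from the observation that $\Ss_\xi$ interchanges $\g{m}$ and $\g{g}_{2\alpha}$. You simply spell out the iteration for the squared shape operator (including the key point that $J\xi^\top\in\g{m}$), which the paper leaves implicit.
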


\begin{proof}
The statements from~(\ref{lemma:shape:xi:1}) to~(\ref{lemma:shape:xi:4}) follow directly from using Lemma~\ref{lemma:nablas} and the definition of the shape operator, that is, $\Ss_\eta Y = -(\nabla_Y \eta)^\top$ for any $Y \in \g{l}$ and any $\eta \in \g{g}_\alpha \ominus \g{m}$.

Let us focus on the last claim of this result concerning the trace. From~(\ref{lemma:shape:xi:1}), we have $\langle \Ss_{\xi}X,X\rangle= (1/2) \langle J \xi, X \rangle \langle Z,X\rangle=0$ for any $X \in \g{m}$. Moreover, using~(\ref{lemma:shape:xi:2}) we deduce that $\langle \Ss_{\xi}Z,Z\rangle= (1/2) \langle J \xi^{\top},Z\rangle=0$, which follows since $J \xi$ is in $\g{g}_\alpha$ and thus $J\xi^\top$ belongs to $\g{m}$.  Since $\g{l} = \g{m} \oplus \g{g}_{2\alpha}$, this implies  that $\tr \Ss_\xi = 0$ for any $\xi \in \g{g}_\alpha \ominus \g{m}$. 
\end{proof}

Recall that $L$ denotes the connected Lie subgroup of $N$ with Lie algebra $\g{l} = \g{m} \oplus \g{g}_{2 \alpha}$, where $\g{m}$ is a real subspace of $\g{g}_\alpha$. Note that $\ad(B)^\top X = l X$, where $l$ is $1/2$ or $1$ depending on whether $X$ is in $\g{m}$ or in $\g{g}_{2 \alpha}$, respectively, by means of~\eqref{equation:brackets}. Then $(\ad(B)\rvert_{\g{l}})^\top = \ad(B) \rvert_{\g{l}}$ is trivially a derivation of $\g{l}$. Moreover, from the claim concerning traces in Lemma~\ref{lemma:shape:xi} we get that $\mathcal{H}_{\g{n}} =  \sum_{j = 1}^k (\tr \Ss_{\xi_j}) \xi_j = 0$ and thus~\eqref{equation:ricci:s} reads now as
\begin{equation}\label{equation:ricci:l}
\Ric^{L} =  -\frac{(n+1)}{2}  \id + (\tr \Ss_B) \ad(B) - \sum_{i = 1}^k (\Ss_{\xi_i}^2 + R^\top_{\xi_i}).
\end{equation}
Therefore, using~\eqref{equation:ricci:l}, we deduce that $\g{l}$ is an algebraic Ricci soliton if and only if, for some real value of $c$, the endomorphism $\mathcal{D}$ of $\g{l}$ defined as 
\begin{equation}\label{equation:derivation:nilsolitons}
\mathcal{D} = \sum_{i = 1}^k (\Ss_{\xi_i}^2 + R^\top_{\xi_i}) + c \id
\end{equation}
is also a derivation of $\g{l}$. Next step consists in calculating the remaining unknown terms involved in~\eqref{equation:derivation:nilsolitons}.
\begin{lemma}\label{lemma:Jacobi:xi}
Let $\xi$ and $X$ be unit vectors in $\g{g}_\alpha \ominus \g{m}$ and $\g{m}$, respectively. Then:
\begin{multicols}{2}
\begin{enumerate}[{\rm (i)}]
\item $R^\top_\xi X = -\frac{1}{4} X - \frac{3}{4}\langle J \xi, X \rangle J \xi^\top  $, \label{lemma:Jacobi:xi:1}
\item $R^\top_\xi Z = -\frac{1}{4} Z  $. \label{lemma:Jacobi:xi:2}
\end{enumerate}
\end{multicols}
\end{lemma}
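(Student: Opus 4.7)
My plan is to compute $R(X,\xi)\xi$ and $R(Z,\xi)\xi$ directly from the curvature definition $R(W,\xi)\xi = \nabla_W \nabla_\xi \xi - \nabla_\xi \nabla_W \xi - \nabla_{[W,\xi]}\xi$, using the covariant derivative formulas listed in Lemma \ref{lemma:nablas} and the bracket relations \eqref{equation:brackets}, and then project the result onto $\g{l} = \g{m} \oplus \g{g}_{2\alpha}$ to read off $R^\top_\xi$.

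For item (\ref{lemma:Jacobi:xi:1}), the key observation is that $\xi$ is a unit vector, so Lemma \ref{lemma:nablas}(\ref{lemma:nablas:2}) gives $\nabla_\xi \xi = \tfrac{1}{2}B$; differentiating once more with Lemma \ref{lemma:nablas}(\ref{lemma:nablas:3}) produces the term $-\tfrac{1}{4}X$. For the second term, Lemma \ref{lemma:nablas}(\ref{lemma:nablas:1}) (applicable because $X \perp \xi$, as $X \in \g{m}$ and $\xi \in \g{g}_\alpha \ominus \g{m}$) gives $\nabla_X \xi = -\tfrac{1}{2}\langle J\xi, X\rangle Z$, which upon applying $\nabla_\xi$ and using Lemma \ref{lemma:nablas}(\ref{lemma:nablas:5}) contributes $\tfrac{1}{4}\langle J\xi,X\rangle J\xi$. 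Finally, by \eqref{equation:brackets}, $[X,\xi] = \langle JX,\xi\rangle Z = -\langle J\xi,X\rangle Z$, and another use of Lemma \ref{lemma:nablas}(\ref{lemma:nablas:5}) yields $\nabla_{[X,\xi]}\xi = \tfrac{1}{2}\langle J\xi,X\rangle J\xi$. Summing produces $R(X,\xi)\xi = -\tfrac{1}{4}X - \tfrac{3}{4}\langle J\xi,X\rangle J\xi$, and since $X \in \g{m}$ is tangent while $J\xi \in \g{g}_\alpha$ projects to $J\xi^\top \in \g{m}$, taking $(\cdot)^\top$ gives the claimed formula.

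For item (\ref{lemma:Jacobi:xi:2}), the first term $\nabla_Z \nabla_\xi \xi = \tfrac{1}{2}\nabla_Z B$ equals $-\tfrac{1}{2}Z$ by Lemma \ref{lemma:nablas}(\ref{lemma:nablas:4}). For $\nabla_\xi \nabla_Z \xi$, we apply Lemma \ref{lemma:nablas}(\ref{lemma:nablas:5}) to get $\nabla_Z \xi = -\tfrac{1}{2}J\xi$, and then compute $\nabla_\xi J\xi$: since $J$ is an orthogonal complex structure, $J\xi \perp \xi$, so Lemma \ref{lemma:nablas}(\ref{lemma:nablas:1}) applies and gives $\nabla_\xi J\xi = -\tfrac{1}{2}\langle J^2\xi,\xi\rangle Z = \tfrac{1}{2}Z$, producing $\nabla_\xi \nabla_Z \xi = -\tfrac{1}{4}Z$. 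Finally $[Z,\xi] = 0$ by \eqref{equation:brackets}, so the third term vanishes. Summing gives $R(Z,\xi)\xi = -\tfrac{1}{4}Z$, which is already tangent to $\g{l}$.

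The computation is essentially mechanical, and I do not anticipate a genuine obstacle; the only small subtlety is noticing that Lemma \ref{lemma:nablas}(\ref{lemma:nablas:1}) applies to the pair $(\xi, J\xi)$ in part (\ref{lemma:Jacobi:xi:2}), because orthogonality of $J$ forces $J\xi \perp \xi$, which is precisely the hypothesis of that formula.
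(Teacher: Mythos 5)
Your computation is correct and follows exactly the paper's own argument: expand $R_\xi X$ and $R_\xi Z$ from the curvature definition, evaluate each covariant derivative via Lemma~\ref{lemma:nablas} and the brackets~\eqref{equation:brackets}, and project onto $\g{l}$. All signs and coefficients check out, including the slightly subtle application of Lemma~\ref{lemma:nablas}~(\ref{lemma:nablas:1}) to the orthogonal pair $(\xi, J\xi)$ in part~(\ref{lemma:Jacobi:xi:2}).
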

\begin{proof}
(\ref{lemma:Jacobi:xi:1}): Using Lemma~\ref{lemma:nablas}~(\ref{lemma:nablas:2})-(\ref{lemma:nablas:1}) and \eqref{equation:brackets} in the second equality below, Lemma~\ref{lemma:nablas}~(\ref{lemma:nablas:3})-(\ref{lemma:nablas:5}) and that $J$ is an orthogonal complex structure in the third equality below, we get 
\begin{equation*}
\begin{aligned}
R_{\xi}X &=  \nabla_{X}\nabla_{\xi} \xi - \nabla_{\xi}\nabla_{X} \xi - \nabla_{[X,\xi]}\xi =\frac{1}{2}\nabla_{X}B+\frac{1}{2}\langle J\xi, X\rangle\nabla_{\xi} Z - \langle JX, \xi \rangle \nabla_{Z}\xi \\
&= -\frac{1}{4}X - \frac{1}{4}\langle J\xi, X\rangle J\xi - \frac{1}{2}\langle X, J\xi\rangle J\xi =  -\frac{1}{4} X - \frac{3}{4}\langle J \xi, X \rangle J \xi.
\end{aligned}
\end{equation*}
(\ref{lemma:Jacobi:xi:2}): Using Lemma~\ref{lemma:nablas}~(\ref{lemma:nablas:2})-(\ref{lemma:nablas:5}) and~\eqref{equation:brackets} in the second equality below, Lemma~\ref{lemma:nablas}~(\ref{lemma:nablas:4})-(\ref{lemma:nablas:1}) in the third equality below, and $J^{2}=-\id$ in the fourth equality below, we obtain 
\begin{equation*}
R_{\xi}Z = \nabla_{Z}\nabla_{\xi} \xi - \nabla_{\xi}\nabla_{Z} \xi - \nabla_{[Z,\xi]}\xi = \frac{1}{2}\nabla_{Z}B+\frac{1}{2}\nabla_{\xi}J\xi = -\frac{1}{2}Z-\frac{1}{4}\langle \xi, J^2 \xi\rangle Z =-\frac{1}{4}Z.\qedhere \\
\end{equation*}
\end{proof}

Recall that $L$ denotes the connected Lie subgroup of $N$ whose Lie algebra is $\g{l} = \g{m} \oplus \g{g}_{2 \alpha}$, where $\g{m}$ is a real subspace of $\g{g}_\alpha$. According to Proposition~\ref{proposition:kahler:angle}, for a certain finite subset $\Phi$ of $[0, \pi/2]$, we have the orthogonal decompositions
\begin{equation}\label{equation:decompositions:nilradical}
\g{m} = \bigoplus_{\psi \in \Phi} \g{m}_\psi \quad  \text{and}  \quad \g{m}_\varphi = \bigoplus_{i = 1}^{k_\varphi} V_\varphi^i \,, \, \quad \text{for any} \quad \varphi \in \Phi \cap (0, \pi/2).
\end{equation}
Now, we calculate the endomorphism $\mathcal{D}$ of $\g{l}$ defined in~\eqref{equation:derivation:nilsolitons} by making using of the above decompositions.
\begin{lemma}\label{lemma:D}
We have:
\begin{enumerate}[{\rm (i)}]
\item $\mathcal{D} \rvert_{\g{m}_\varphi} = \left(-\frac{1}{4} \dim (\g{g}_\alpha \ominus \g{m}) - \frac{1}{2} \sin^2\varphi +c \right) \id$ for any $\varphi \in \Phi$. \label{lemma:D:1}
\item $\mathcal{D} \rvert_{\g{g}_{2\alpha}} = \left(-\frac{1}{4} \dim (\g{g}_\alpha \ominus \g{m}) + \frac{1}{4} \sum_{\psi \in \Phi} \dim \g{m}_\psi \sin^2 \psi +c \right) \id$.\label{lemma:D:2}
\end{enumerate}
\end{lemma}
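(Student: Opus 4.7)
The starting point is the definition
\[
\mathcal{D} = \sum_{i=1}^{k} \bigl(\Ss_{\xi_i}^{2} + R^\top_{\xi_i}\bigr) + c \id,
\]
where $\{\xi_i\}_{i=1}^k$ is an orthonormal basis of $\g{g}_\alpha \ominus \g{m}$ (note that $\g{g}_{2\alpha} \subset \g{l}$, so the normal space of $\g{l}$ inside $\g{n}$ is exactly $\g{g}_\alpha\ominus\g{m}$). For a unit vector $X \in \g{m}_\varphi$, Lemmas~\ref{lemma:shape:xi} and~\ref{lemma:Jacobi:xi} give, by direct substitution,
$(\Ss_{\xi_i}^2 + R^\top_{\xi_i}) X = -\tfrac14 X - \tfrac12 \langle J\xi_i, X\rangle J\xi_i^\top$, whereas for $Z$ they produce
$(\Ss_{\xi_i}^2 + R^\top_{\xi_i}) Z = \tfrac14 |J\xi_i^\top|^2 Z - \tfrac14 Z$. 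Summing over $i$ reduces the proof to the two identities
\begin{equation}\label{eq:plan:twoidentities}
\sum_{i=1}^{k} \langle J\xi_i, X\rangle J\xi_i^\top = \sin^2\varphi\, X, \qquad \sum_{i=1}^{k} |J\xi_i^\top|^2 = \sum_{\psi\in\Phi} \dim(\g{m}_\psi) \sin^2\psi.
\end{equation}

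For the first identity, let $P_\g{m}$ denote orthogonal projection onto $\g{m}$ inside $\g{g}_\alpha$, so that $P^\perp=\id-P_\g{m}$ projects onto $\g{g}_\alpha\ominus\g{m}$. Using $\langle J\xi_i, X\rangle = -\langle \xi_i, JX\rangle$ and expanding $(JX)^\perp$ in the basis $\{\xi_i\}$, one gets
$\sum_i \langle J\xi_i, X\rangle J\xi_i = -J P^\perp J X = X + J P_\g{m} J X$, hence after projecting to $\g{m}$,
\[
\sum_i \langle J\xi_i, X\rangle J\xi_i^\top = X + P_\g{m} J P_\g{m} J X.
\]
The key step is therefore to show that the self-adjoint operator $P_\g{m} J P_\g{m} J$ acts as $-\cos^2\varphi\,\id$ on each $\g{m}_\varphi$. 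I would prove this by using Proposition~\ref{proposition:kahler:angle}: item~(\ref{proposition:kahler:angle:1}) ensures that $J\g{m}_\varphi \subset \C\g{m}_\varphi$ is orthogonal to $\g{m}_\psi$ for $\psi \neq \varphi$, so $P_\g{m} J$ preserves $\g{m}_\varphi$; then I would compute on the explicit orthonormal basis $\{e_{2l-1},\, \cos\varphi\, Je_{2l-1} + \sin\varphi\, Je_{2l}\}$ of $V_\varphi^l$ given by item~(\ref{proposition:kahler:angle:3}), where the $\C$-orthonormality of the $e_j$'s makes the calculation immediate. The cases $\varphi = 0$ (where $\g{m}_0$ is $J$-invariant) and $\varphi = \pi/2$ (totally real) are handled separately and agree with the formula.

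For the second identity in~\eqref{eq:plan:twoidentities}, I would split $\g{g}_\alpha \ominus \g{m}$ using Proposition~\ref{proposition:kahler:angle} as the orthogonal sum of the subspaces $\g{m}_\varphi^\perp = \C \g{m}_\varphi \ominus \g{m}_\varphi$ (for $\varphi \in \Phi \setminus\{0\}$) and a possibly non-trivial complex subspace lying in the $\langle\cdot,\cdot\rangle_\C$-complement of $\bigoplus_\varphi \C\g{m}_\varphi$. For $\xi_i$ in this last complex subspace, $J\xi_i$ is orthogonal to $\g{m}$ so $J\xi_i^\top = 0$. For $\xi_i$ unit in $\g{m}_\varphi^\perp$, items~(\ref{proposition:kahler:angle:1})--(\ref{proposition:kahler:angle:2}) of Proposition~\ref{proposition:kahler:angle} force $J\xi_i \in \C\g{m}_\varphi$ with $|(J\xi_i)_{\g{m}_\varphi^\perp}|^2 = \cos^2\varphi$ (by constancy of the Kähler angle of $\g{m}_\varphi^\perp$), hence $|J\xi_i^\top|^2 = |J\xi_i|^2 - |(J\xi_i)_{\g{m}_\varphi^\perp}|^2 = \sin^2\varphi$. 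Summing over a basis and using $\dim \g{m}_\varphi^\perp = \dim \g{m}_\varphi$ gives the stated formula.

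The main obstacle is the bookkeeping with the Kähler angle decomposition: everything reduces to the identity $P_\g{m} J P_\g{m} J = -\cos^2\varphi\,\id$ on $\g{m}_\varphi$, and once this is shown via the canonical basis of Proposition~\ref{proposition:kahler:angle}~(\ref{proposition:kahler:angle:3}), both items~(\ref{lemma:D:1}) and~(\ref{lemma:D:2}) follow from a routine assembly of the previous lemmas.
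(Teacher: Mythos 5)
Your proposal is correct and follows essentially the same route as the paper: both reduce item~(\ref{lemma:D:1}) to evaluating $\sum_i \langle J\xi_i, X\rangle J\xi_i^\top$ on each $\g{m}_\varphi$ via the canonical basis of Proposition~\ref{proposition:kahler:angle}~(\ref{proposition:kahler:angle:3}) (your packaging of this as the operator identity $P_{\g{m}}JP_{\g{m}}J = -\cos^2\varphi\,\id$ on $\g{m}_\varphi$ is only a cosmetic difference), and both prove item~(\ref{lemma:D:2}) by decomposing $\g{g}_\alpha\ominus\g{m}$ into $\g{c}\oplus\bigoplus_\varphi\g{m}_\varphi^\perp$ and using that each $\g{m}_\varphi^\perp$ has constant K\"ahler angle $\varphi$ and the same dimension as $\g{m}_\varphi$.
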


\begin{proof}
(\ref{lemma:D:1}): Take $X \in \g{m}$. Recall that $\{ \xi_i \, : \, i = 1, \dots, k \}$ is an orthonormal basis of $\g{g}_\alpha \ominus \g{m}$. Then, from definition~\eqref{equation:derivation:nilsolitons}, together with Lemma~\ref{lemma:shape:xi}~(\ref{lemma:shape:xi:3}) and Lemma~\ref{lemma:Jacobi:xi}~(\ref{lemma:Jacobi:xi:1}), we deduce
\begin{equation}\label{equation:d:mphi}
\mathcal{D} X = \left(-\frac{1}{4} \dim (\g{g}_\alpha \ominus \g{m}) + c\right) X - \frac{1}{2} \sum_{i = 1}^k \langle J \xi_i, X \rangle J \xi_i^\top. 
\end{equation}
On the one hand, if $X \in \g{m}_0$, then $JX \in \g{m}_0$, as $\g{m}_0$ is a complex subspace of $\g{g}_\alpha$. Hence $\langle  J\xi, X \rangle = -\langle \xi, JX \rangle = 0$ for any normal vector $\xi$ in $\g{g}_\alpha \ominus \g{m}$. Thus, the second term in the right hand side of~\eqref{equation:d:mphi} vanishes and we obtain~(\ref{lemma:D:1}) when $\varphi = 0$.

On the other hand, take $X \in \g{m}_{\pi/2}$. We can and will assume  $\langle X, X \rangle = 1$, as $\mathcal{D}$ is an endomorphism. Using Proposition~\ref{proposition:kahler:angle}~(\ref{proposition:kahler:angle:1}) and the fact that $\g{m}_{\pi/2}$ is totally real,  we get that $JX$ is a unit normal vector to $\g{l}$. Thus, we can assume that $JX$ is an element of the orthonormal basis $\{ \xi_i \, : \, i = 1, \dots, k \}$. Using this and that $J$ is an orthogonal complex structure of $\g{g}_\alpha$, we get~(\ref{lemma:D:1}) when~$\varphi = \pi/2$.

Finally, let us assume that $X \in V_\varphi^l \subset \g{m}_\varphi$, for some $\varphi \in \Phi \cap (0, \pi/2)$ and some $l \in \{1, \dots, k_\varphi\}$. For a certain complex subspace $\g{c}$ of $\g{g}_\alpha$, we can consider the orthogonal decomposition of the normal space to $\g{l}$ in~$\g{n}$ given by
\begin{equation}\label{equation:decomposition:normal}
\g{g}_\alpha \ominus \g{m} = \g{c} \oplus \left( \bigoplus_{\varphi \in \Phi \backslash \{0\}} \g{m}_\varphi^\perp  \right),
\end{equation}
where $\g{m}_\varphi^\perp = \C \g{m}_\varphi \ominus \g{m}_\varphi$ for each non-zero $\varphi \in \Phi$, as defined in Proposition~\ref{proposition:kahler:angle}~(\ref{proposition:kahler:angle:2}). Take $\xi \in \g{m}_\psi^\perp$, or $\xi \in \C V_\varphi^m \ominus V_\varphi^m$, or $\xi \in \g{c}$, for any $\psi \in \Phi \backslash \{0\}$ different from $\varphi$ and any $m \in \{1, \dots, k_\varphi\}$ different from $l$. Then $\langle J \xi, X \rangle = 0$. This follows from Proposition~\ref{proposition:kahler:angle}~(\ref{proposition:kahler:angle:1}) when $\xi \in \g{m}_\psi^\perp$, from Proposition~\ref{proposition:kahler:angle}~(\ref{proposition:kahler:angle:3}) when $\xi \in \C V_\varphi^m \ominus V_\varphi^m$, and from the fact that $\g{c}$ is a complex subspace of $\g{g}_\alpha \ominus \g{m}$ when $\xi \in \g{c}$. Thus, in order to calculate $\mathcal{D} X$ with $X \in V_\varphi^l$, it suffices to consider normal vectors to $\g{l}$ in $\C V_\varphi^l \ominus V_\varphi^l$.  

According to Proposition~\ref{proposition:kahler:angle}~(\ref{proposition:kahler:angle:3}), we can write $V_\varphi^l = \spann \{X, Y\}$ and $\C V_\varphi^l \ominus V_\varphi^l = \spann \{ \eta_1, \eta_2\}$, with $X = e_{2l-1}$, $Y = \cos \varphi J e_{2l-1} + \sin \varphi J e_{2l}$, $\eta_1 = e_{2l}$, and $\eta_2 = -\sin \varphi J e_{2l-1} + \cos \varphi J e_{2l}$, where ${e_{2l-1}, e_{2l}}$ are unit $\C$-orthogonal vectors. Moreover, note that $J \eta^\top$ is in $V_\varphi^l$ for any $\eta \in \C V_\varphi^l \ominus V_\varphi^l$, as follows from Proposition~\ref{proposition:kahler:angle}~(\ref{proposition:kahler:angle:3}). Thus, we get $J \eta_1^\top = \sin \varphi \, Y$ and $J \eta_2^\top = \sin \varphi \, X$. Using all these considerations and the expressions for $X$, $Y$, $\eta_1$ and $\eta_2$ given above, together with~\eqref{equation:d:mphi}, we obtain~(\ref{lemma:D:1}) for $\varphi \in \Phi \cap (0, \pi/2)$. 

(\ref{lemma:D:2}): Take an orthonormal basis $\{ \xi_i \, : \, i = 1, \dots, k \}$ of $\g{g}_\alpha \ominus \g{m}$ adapted to decomposition~\eqref{equation:decomposition:normal}. Since each $\g{m}_\varphi^\perp$ is known to have constant Kähler angle $\varphi$ for any $\varphi \in \Phi \backslash \{0\}$ by virtue of Proposition~\ref{proposition:kahler:angle}~(\ref{proposition:kahler:angle:2}), we get that $\langle J \xi^\top, J \xi^\top \rangle = \sin^2 \varphi$ for any $\xi \in \g{m}_\varphi^\perp$. This is still true if $\xi \in \g{c}$, as it has constant Kähler angle zero. Using this in Lemma~\ref{lemma:shape:xi}~(\ref{lemma:shape:xi:4}), together with the definition of $\mathcal{D}$ given in~\eqref{equation:derivation:nilsolitons} and Lemma~\ref{lemma:Jacobi:xi}~(\ref{lemma:Jacobi:xi:2}), we deduce~(\ref{lemma:D:2}).
\end{proof}

\begin{proposition}\label{proposition:nilradical}
Let $S$ be a connected Lie subgroup of dimension at least two of the solvable part $AN$ of the Iwasawa decomposition associated with the complex hyperbolic space $\C H^n$. Then, $S$ is a Ricci soliton when considered with the induced metric if and only if one the following conditions holds for the Lie algebra $\g{l}$ of its nilradical $L$:
\begin{enumerate}[{\rm (i)}]	
\item $\g{l} = \g{m}_{\pi/2} \oplus \R (V + t Z)$, where $\g{m}_{\pi/2}$ is a totally real subspace of $\g{g}_\alpha$, $V \in \g{g}_\alpha$ is $\C$-orthogonal to $\g{m}_{\pi/2}$, and $t \in \R$. If $V = 0$ and $t = 0$, then $\dim \g{m}_{\pi/2} \geq 1$. The Lie algebra~$\g{l}$ is abelian and $L$ is isometric to a Euclidean space. \label{proposition:nilradical:1}
\item $\g{l} =  \g{m}_{\varphi} \oplus  \g{m}_{\pi/2} \oplus \g{g}_{2\alpha}$, where $\g{m}_\varphi$ is a non-zero subspace of $\g{g}_\alpha$ of constant Kähler angle $\varphi \in [0, \pi/2)$, and $\g{m}_{\pi/2}$ is a totally real subspace of $\g{g}_\alpha$. The Lie algebra $\g{l}$ is never abelian and thus $L$ is a non-Einstein Ricci soliton. \label{proposition:nilradical:2}
\end{enumerate}
\end{proposition}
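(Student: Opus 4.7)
My plan is to reduce being a Ricci soliton to being an algebraic Ricci soliton (these are equivalent for nilmanifolds), that is, to finding $c\in \R$ such that the endomorphism $\mathcal{D}$ of $\g{l}$ from~\eqref{equation:derivation:nilsolitons} is a derivation. I split according to whether $\g{l}$ is abelian.

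First, if $\g{l}$ is abelian, then $L$ endowed with its left-invariant induced metric is flat (Koszul's formula yields $\nabla = 0$ on left-invariant fields), hence Ricci-flat and trivially a Ricci soliton. To produce the normal form in~(\ref{proposition:nilradical:1}), project $\g{l}$ along $\R Z$ onto $\g{g}_\alpha$. Using $[U,V]=\langle JU,V\rangle Z$, the image $\g{m}'=\pi(\g{l})$ must be totally real. If $Z\in \g{l}$, write $\g{l}=\g{m}' \oplus \R Z=\g{m}_{\pi/2}\oplus \R(V+tZ)$ with $V=0$. Otherwise, $\pi$ restricted to $\g{l}$ is injective and $\g{l}$ is the graph of a linear map $\sigma\colon \g{m}'\to \R$. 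Split $\g{m}'=\ker\sigma \oplus \R V$ orthogonally, set $\g{m}_{\pi/2}=\ker\sigma$ and $t=\sigma(V)$ to obtain $\g{l}=\g{m}_{\pi/2}\oplus \R(V+tZ)$. Total reality of $\g{m}'$ gives $JV\perp \g{m}_{\pi/2}$, yielding $\C V\perp \g{m}_{\pi/2}$. Conversely, any $\g{l}$ of that form is abelian, so~(\ref{proposition:nilradical:1}) is characterized.

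Next, if $\g{l}$ is non-abelian, then necessarily $Z\in \g{l}$ and $\g{l}=\g{m}\oplus \g{g}_{2\alpha}$ with $\g{m}$ not totally real. Apply Proposition~\ref{proposition:kahler:angle} to decompose $\g{m}=\bigoplus_{\psi\in \Phi}\g{m}_\psi$. By Lemma~\ref{lemma:D}, $\mathcal{D}$ acts as a scalar $d_\psi \id$ on each $\g{m}_\psi$ and as a scalar $d_{2\alpha}\id$ on $\g{g}_{2\alpha}$. The derivation identity applied to $U\in \g{m}_\varphi$, $V\in \g{m}_\psi$ becomes
\begin{equation*}
(d_{2\alpha}-d_\varphi-d_\psi)\,\langle JU,V\rangle\, Z=0.
\end{equation*}
Proposition~\ref{proposition:kahler:angle}(\ref{proposition:kahler:angle:1}) makes this trivial for $\varphi\neq \psi$, and total reality makes it trivial for $\varphi=\psi=\pi/2$. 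Hence the only non-trivial constraint is $d_{2\alpha}=2d_\varphi$ for each $\varphi<\pi/2$ with $\g{m}_\varphi\neq 0$. Since $d_\varphi$ depends on $\sin^2\varphi$, the presence of two such angles would force $\sin^2\varphi=\sin^2\psi$ and thus $\varphi=\psi$. A unique angle $\varphi\in [0,\pi/2)$ therefore survives, giving $\g{l}=\g{m}_\varphi\oplus \g{m}_{\pi/2}\oplus \g{g}_{2\alpha}$. Conversely, for any such $\g{l}$, the single equation $d_{2\alpha}=2d_\varphi$ is affine in $c$ and has a unique solution, producing a derivation and hence a nilsoliton, proving~(\ref{proposition:nilradical:2}). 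Non-abelianness of $\g{l}$ rules out the Einstein property for $L$.

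The main obstacle is the abelian normal form, particularly establishing the $\C$-orthogonality $\C V\perp \g{m}_{\pi/2}$; a secondary subtlety is enforcing uniqueness of the Kähler angle $\varphi<\pi/2$ in the non-abelian case, which the $\sin^2\varphi$ dependence of $d_\varphi$ from Lemma~\ref{lemma:D} provides.
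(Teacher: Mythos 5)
Your proposal is correct and follows essentially the same route as the paper: reduce to the algebraic Ricci soliton condition, split on whether $\g{l}$ is abelian, and in the non-abelian case use the Kähler-angle decomposition together with Lemma~\ref{lemma:D} to turn the derivation identity into the single scalar constraint $d_{2\alpha}=2d_\varphi$, whose dependence on $\sin^2\varphi$ forces a unique angle in $[0,\pi/2)$. Your graph-of-a-functional derivation of the abelian normal form is a slightly more explicit packaging of the paper's bracket computation, but the substance is identical.
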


\begin{proof}
Since $S$ is a Lie subgroup of dimension at least two of the solvable part $AN$ of the Iwasawa decomposition associated with the complex hyperbolic space $\C H^n$, then the Lie algebra $\g{l}$ of its nilradical is contained in $\g{n}$ by virtue of Lemma~\ref{lemma:nilpotent:nilradical}~(\ref{lemma:nilpotent:nilradical:1}). 

Without loss of generality, we can and will assume that $\g{l} = \g{m} \oplus \R (V + t Z)$, where $\g{m}$ is a real subspace of $\g{g}_\alpha$, $V$ is a vector in $\g{g}_\alpha \ominus \g{m}$, and $t$ a real number. Let $L$ be the connected Lie subgroup of $AN$ with Lie algebra $\g{l}$. Moreover, from Proposition~\ref{proposition:kahler:angle}, we have the orthogonal decomposition $\g{m} = \bigoplus_{\varphi \in \Phi} \g{m}_\varphi$, where $\g{m}_\varphi$ is a subspace of $\g{g}_\alpha$ of constant Kähler angle $\varphi$, for each $\varphi$ in a finite subset $\Phi$ of $[0, \pi/2]$. We will divide this proof depending on whether $\g{l}$ is an abelian Lie algebra or not.

(\ref{proposition:nilradical:1}): \textbf{$\g{l}$ abelian}. Then $\g{m}$ has to be totally real, $\Phi \subset \{\pi/2\}$, by virtue of~\eqref{equation:brackets}. Moreover, $\C V$ must be orthogonal to $\g{m}$. This is trivially true if $V = 0$ or if $\g{m} = 0$. Otherwise, take $Y \in \g{m}$ not orthogonal to $JV$. Then $[V + t Z, Y] = \langle JV, Y\rangle Z \neq 0$ and $\g{l}$ would not be abelian. Recall from Section~\ref{section:preliminaries} that $L$ is diffeomorphic to some Euclidean space. Using this and that $L$ is abelian and thus flat, we deduce that it is isometric to a Euclidean space.

(\ref{proposition:nilradical:2}): \textbf{$\g{l}$ not abelian}. On the one hand, since $[\g{n}, \g{n}] \subset \g{g}_{2\alpha}$ from~\eqref{equation:brackets}, we deduce that $\g{g}_{2\alpha}\subset\g{l}$ and then we can rewrite $\g{l}$ as $\g{l} = \g{m} \oplus \g{g}_{2\alpha}$. On the other hand, $\Phi \neq \{ \pi /2 \}$, since $\g{m}_{\pi/2}$ is totally real and $\g{l}$ is not abelian.

The proof is now based on analyzing~\eqref{definition:derivation} for any $X$, $Y \in \g{l}$. It suffices to take them running in an orthonormal basis of $\g{l}$, as $\mathcal{D}$ is linear. Since $\mathcal{D} \rvert_\g{m}$ and $\mathcal{D} \rvert_{\g{g}_{2\alpha}}$ are (in general different) multiples of the identity by means of Lemma~\ref{lemma:D}, condition~\eqref{definition:derivation} holds trivially for any $X$, $Y \in \g{l}$ such that $[X, Y] = 0$. Thus, we can skip the following cases: $X$, $Y \in \g{m}_{\pi/2}$; $X \in \g{m}_\varphi$ and $Y \in \g{m}_\psi$ for any distinct $\varphi$, $\psi \in \Phi$; $X \in V_\varphi^l$ and $Y \in V_\varphi^m$, for any $\varphi \in \Phi \backslash \{ 0 \}$ and any distinct $l$, $m \in \{ 1, \dots, k_\varphi \}$ (recall $2 k_\varphi = \dim \g{m}_\varphi$); $\C$-orthogonal $X$, $Y \in \g{m}_0$; and $X$ or $Y$ in $\g{g}_{2\alpha}$.

Therefore, we will consider simultaneously the two cases left: $X$, $Y \in V_\varphi^l$, for some $\varphi \in \Phi \cap (0, \pi/2)$ and some $l \in \{ 1, \dots, k_\varphi \}$; and $X$, $Y = JX \in \g{m}_0$. According to Proposition~\ref{proposition:kahler:angle}~(\ref{proposition:kahler:angle:3}), we can and will assume $\langle JX, Y \rangle = \cos \varphi$ for both cases. Using this in~\eqref{equation:brackets}, together with Lemma~\ref{lemma:D}~(\ref{lemma:D:1})-(\ref{lemma:D:2}), we deduce
\begin{equation}\label{equation:nilsoliton}
\begin{aligned}
\mathcal{D}[X, Y] & = \cos \varphi \left( -\frac{1}{4} \dim (\g{g}_\alpha \ominus \g{m}) + \frac{1}{4} \sum_{\psi \in \Phi} \dim \g{m}_\psi \sin^2\psi +c  \right)Z, \\
[\mathcal{D} X, Y] & = [X, \mathcal{D}Y] = \cos \varphi \left(-\frac{1}{4} \dim (\g{g}_\alpha \ominus \g{m}) -\frac{1}{2} \sin^2 \varphi + c\right)Z.
\end{aligned} 
\end{equation}
Thus, combining~\eqref{definition:derivation}, ~\eqref{equation:nilsoliton} and $\varphi \neq \pi/2$, we deduce that $L$ is a nilsoliton if and only if 
\begin{equation}\label{equation:c}
c = \frac{1}{4} \left(\dim (\g{g}_\alpha \ominus \g{m}) +\sum_{\psi \in \Phi} \dim \g{m}_\psi \sin^2\psi\right) + \sin^2  \varphi.
\end{equation}
Since the function $\sin^2$ is injective in $[0, \pi/2]$, it follows from~\eqref{equation:c} that $L$ is a nilsoliton if and only if $\{ \varphi \} \subset \Phi \subset \{ \varphi, \pi/2\}$, for some $\varphi \in [0, \pi/2)$. The last claim in~(\ref{proposition:nilradical:2}) follows from taking into account that $\g{l} =  \g{m}_{\varphi} \oplus  \g{m}_{\pi/2} \oplus \g{g}_{2\alpha}$ is a non-abelian nilpotent Lie algebra, and it does not admit Einstein metrics~\cite[Theorem~2.4]{Mi76}.
\end{proof}

\begin{remark}\label{remark:nilradical:nilsoliton}
Let $L$ be a connected Lie subgroup of the nilpotent part $N$ of the Iwasawa decomposition associated with a complex hyperbolic space. Then, $L$ is its own nilradical. Therefore, Proposition~\ref{proposition:nilradical} can be also regarded as a classification of the connected Lie subgroups of $N$ which are Ricci solitons when they are considered with the induced metric.
\end{remark}

\begin{remark}\label{remark:data:nilsolitons}
Let us assume that we are under the conditions of Proposition~\ref{proposition:nilradical}~(\ref{proposition:nilradical:2}), that is, we consider the Lie subalgebra $\g{l} = \g{m}_\varphi \oplus \g{m}_{\pi/2} \oplus \g{g}_{2\alpha}$ of $\g{n}$, for a certain $\varphi \in [0, \pi/2)$, corresponding to a Lie group $L$. Thus, using~\eqref{equation:derivation:nilsolitons}, Lemma~\ref{lemma:D} with $c$ as in~\eqref{equation:c}, taking into account that $\Phi \subset \{ \varphi, \pi/2 \}$, and recalling that $\dim \g{g}_\alpha = 2n-2$, now \eqref{equation:ricci:l} reads as
\begin{equation}\label{equation:ricci:nonabelian:nilradical}
\Ric^L = -\frac{1}{4} \cos ^2\varphi (\dim \g{m}_\varphi + 4)  \id + (\tr \Ss_B) \ad (B) - \mathcal{D},
\end{equation}
where $(\tr \Ss_B)\ad (B) - \mathcal{D}$ is a derivation of $\g{l}$ not proportional to the identity. Otherwise $L$ would be a non-abelian nilpotent Lie algebra endowed with an left-invariant Einstein metric, contradicting~\cite[Theorem~2.4]{Mi76}. Then, $-(1/4)  \cos ^2\varphi (\dim \g{m}_\varphi + 4)$ is the defining constant of $L$ as an algebraic Ricci soliton. Moreover $\ad(B) \rvert_{\g{m}_\psi} = (1/2) \id$ for $\psi \in \{ \varphi, \pi/2 \}$, and $\ad(B) \rvert_{\g{g}_{2\alpha}} = \id$ by virtue of~\eqref{equation:brackets}, and thus $\ad (B)$ is a derivation of $\g{l}$ which is never proportional to the identity. Finally, from Lemma~\ref{lemma:D} and~\eqref{equation:c} we get 

\begin{align}\label{equation:nilsoliton:after}
\nonumber   \mathcal{D} \rvert_{\g{m}_\varphi} & = \frac{1}{4} \left( \sin^2 \varphi (\dim \g{m}_\varphi +2) + \dim \g{m}_{\pi/2} \right) \id, \\
\mathcal{D} \rvert_{\g{m}_{\pi/2}} & = \frac{1}{4} \left( \sin^2 \varphi (\dim \g{m}_\varphi +4) + \dim \g{m}_{\pi/2}-2 \right)  \id, \\
\nonumber\mathcal{D} \rvert_{\g{g}_{2\alpha}} & = \frac{1}{2} \left( \sin^2 \varphi (\dim \g{m}_\varphi +2) + \dim \g{m}_{\pi/2} \right) \id.
\end{align}
\end{remark}

\section{Extending to solvsolitons and Main Theorem}\label{section:solvsolitons}
This section is devoted to finishing the proof of the Main Theorem. In order to do so, we will basically combine Proposition~\ref{proposition:nilradical} and Lemma~\ref{lemma:nilpotent:nilradical}~(\ref{lemma:nilpotent:nilradical:2}) with a result achieved in~\cite{La11C} that we state below, where Lauret details how to obtain any solvsoliton from its nilradical. 

\begin{proposition}\label{proposition:lauret} \cite[Theorem~4.8]{La11C}
Let $S$ be a solvmanifold with metric Lie algebra $(\g{s}, \langle \cdot, \cdot \rangle)$ and consider the orthogonal decomposition $\g{s} = \g{b} \oplus \g{l}$, where $\g{l}$ is the nilradical of $\g{s}$. Then, $\Ric^S = \bar{c} \id + D$ for some $\bar{c} \in \R$ and some derivation $D$ of $\g{s}$, i.e., $S$ is an algebraic Ricci soliton, if and only if the following conditions hold:
\begin{enumerate}[{\rm (i)}]
\item $\g{l}$ is a nilsoliton with the induced metric, namely $\Ric^L = \bar{c} \id + D_1$, for some derivation $D_1$ of $\g{l}$.\label{proposition:lauret:1}
\item $\g{b}$ is abelian.\label{proposition:lauret:2}
\item  $(\ad X)^t$ is a derivation of $\g{s}$, for any $X \in \g{b}$. \label{proposition:lauret:3}
\item $\langle X, X \rangle = -\frac{1}{4 \bar{c}} \tr ( (\ad X) + (\ad X)^t )^2$, for any $X \in \g{b}$.\label{proposition:lauret:4}
\end{enumerate}
Moreover, $S$ is Einstein if and only if $D_1 = \ad (X) \rvert_{\g{l}}$ for some $X \in \g{b}$.
\end{proposition}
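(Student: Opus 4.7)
The plan is to dissect the algebraic soliton equation $\Ric^S = \bar{c}\,\id + D$ blockwise with respect to the orthogonal decomposition $\g{s} = \g{b} \oplus \g{l}$, using the classical formula expressing the Ricci operator of a left-invariant metric on a Lie group in terms of the bracket, the mean curvature vector $H$, and the Killing form. Two structural facts are in constant use throughout: first, $[\g{s}, \g{s}] \subseteq \g{l}$ for any solvable Lie algebra, so $\ad X$ sends $\g{s}$ into $\g{l}$ for every $X \in \g{s}$; and second, the nilradical $\g{l}$ is characteristic, hence $D(\g{l}) \subseteq \g{l}$ and $D|_\g{l}$ is itself a derivation of $\g{l}$.

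For the necessity direction, I would analyze the soliton equation componentwise. Restricting to $\g{l}$ and comparing $\Ric^S|_\g{l}$ with $\Ric^L$ via a Gauss-type identity (with corrections quadratic in $\ad X|_\g{l}$ for $X \in \g{b}$) yields condition~(\ref{proposition:lauret:1}) after absorbing the corrections into a modified derivation of $\g{l}$. The abelianness of $\g{b}$ follows from inspecting the $\g{b}$-to-$\g{b}$ block of the equation: the derivation identity on $D$ combined with the symmetry of $\Ric^S$ and the fact that $\ad X|_\g{b}$ takes values in $\g{l} \perp \g{b}$ for $X \in \g{b}$ forces $[\g{b}, \g{b}] = 0$, giving~(\ref{proposition:lauret:2}). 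Condition~(\ref{proposition:lauret:3}) emerges from expanding $D[X, Y] = [DX, Y] + [X, DY]$ for $X \in \g{b}$ and $Y \in \g{l}$ and substituting the Ricci formula, isolating $(\ad X)^t$ as a derivation. Finally, the norm identity~(\ref{proposition:lauret:4}) follows from computing $\langle \Ric^S X, X\rangle$ via the Koszul-type formula for $X \in \g{b}$, where the trace contributions $\tr(\ad X)^2$, $\tr((\ad X)^t)^2$ and $\tr (\ad X)(\ad X)^t$ assemble into $\tr((\ad X) + (\ad X)^t)^2$.

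For the sufficiency direction, I would construct an explicit derivation $D$ of $\g{s}$ by combining $D_1$ on $\g{l}$ with the symmetric parts of $\ad X$ for $X \in \g{b}$, using condition~(\ref{proposition:lauret:3}) to ensure that $D$ respects the bracket on mixed pairs. The equation $\Ric^S = \bar{c}\,\id + D$ is then checked blockwise: condition~(\ref{proposition:lauret:1}) handles the $\g{l}$-block after absorbing correction terms, conditions~(\ref{proposition:lauret:2}) and~(\ref{proposition:lauret:4}) handle the $\g{b}$-block, and condition~(\ref{proposition:lauret:3}) ensures the vanishing of the cross Ricci between $\g{b}$ and $\g{l}$. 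The Einstein characterization is then immediate: $S$ is Einstein iff the derivation $D$ restricted to $\g{l}$ can be realized as an inner derivation from $\g{b}$, i.e., $D_1 = \ad(X)|_\g{l}$ for some $X \in \g{b}$, in which case the soliton constant extends uniformly across $\g{s}$.

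The main obstacle I anticipate is the careful bookkeeping needed to separate the three contributions in the Ricci formula (the moment-map-type term, the Killing form, and the symmetric part of $\ad H$) and to verify that conditions~(\ref{proposition:lauret:3}) and~(\ref{proposition:lauret:4}) capture exactly the algebraic residue of the soliton equation once the nilsoliton condition on $\g{l}$ has been imposed. Condition~(\ref{proposition:lauret:3}) encodes a surprisingly strong rigidity: the abelian factor $\g{b}$ must act on $\g{l}$ by derivations whose symmetric parts are themselves derivations of $\g{s}$, a feature that is highly non-generic among solvable metric Lie algebras.
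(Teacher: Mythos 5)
This proposition is not proved in the paper at all: it is quoted verbatim from Lauret's work (\cite[Theorem~4.8]{La11C}), with only the final Einstein claim attributed to \cite[Proposition~4.3]{La11C}. So the relevant comparison is between your outline and Lauret's actual proof, and there your proposal has genuine gaps. What you have written is a roadmap in which every substantive step is deferred: phrases such as ``forces $[\g{b},\g{b}]=0$'', ``emerges from expanding'', and ``after absorbing the corrections into a modified derivation'' stand in for precisely the computations that constitute the theorem. The two hardest points are the necessity of (\ref{proposition:lauret:2}) and (\ref{proposition:lauret:3}). For (\ref{proposition:lauret:2}), note that $[\g{b},\g{b}]\subseteq\g{l}$ automatically (the derived algebra of a solvable Lie algebra lies in the nilradical), so abelianness of $\g{b}$ is the assertion that a bracket which a priori lands in $\g{l}$ vanishes; this does not follow from ``inspecting the $\g{b}$-to-$\g{b}$ block'' together with symmetry of $\Ric^S$, since the obstruction lives in the $\g{b}$-to-$\g{l}$ components. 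In Lauret's argument this, and the normality statement (\ref{proposition:lauret:3}), come out of a careful analysis of the formula $\Ric = M - \tfrac{1}{2}B - S(\ad H)$ combined with trace identities and the structural results of his Section~4 (Theorem~4.2 and Propositions~4.3--4.5), not from a direct block decomposition of the soliton equation.

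There is also a circularity in your treatment of (\ref{proposition:lauret:1}): the ``corrections'' relating $\Ric^S\rvert_{\g{l}}$ to $\Ric^L$ involve the symmetric parts $S(\ad X)\rvert_{\g{l}}$ for $X\in\g{b}$ (including $X=H$), and the claim that these can be ``absorbed into a modified derivation of $\g{l}$'' is essentially equivalent to condition~(\ref{proposition:lauret:3}), which you have not yet established at that stage of the necessity argument. The sufficiency direction and the polarization argument for (\ref{proposition:lauret:4}) are plausible as sketched, and your closing remark that (\ref{proposition:lauret:3}) encodes a strong rigidity is accurate --- but that rigidity is exactly what a proof must derive, and your proposal does not. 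As it stands, the proposal is an honest description of the shape of Lauret's theorem rather than a proof of it; in the context of this paper the correct move is simply to cite \cite[Theorem~4.8]{La11C}, as the authors do.
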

The last claim of the above result, concerning the Einstein condition, follows from~\cite[Proposition~4.3]{La11C}. If $\g{s} = \g{b} \oplus \g{l}$ is the Lie algebra of a solvsoliton constructed from its nilradical $L$ with Lie algebra $\g{l}$ as detailed in Proposition~\ref{proposition:lauret}, we will say that $S$ is an extension of $L$ whose rank is defined as the dimension of~$\g{b}$. Hence, by means of Lemma~\ref{lemma:nilpotent:nilradical}~(\ref{lemma:nilpotent:nilradical:2}), any Lie subgroup of $AN \cong \C H^n$ that is a Ricci soliton with the induced metric will be a rank one extension of its nilradical $L \subset N$. These nilradicals were obtained in Proposition~\ref{proposition:nilradical}.

Now, we state and prove an auxiliary result.

\begin{lemma}\label{lemma:derivation:JU}
Let $\g{l} = \g{m} \oplus \g{g}_{2\alpha}$ and $\g{s} = \R T \oplus \g{l}$  be Lie subalgebras of $\g{a} \oplus \g{n}$, where $\g{m}$ is a non-zero real subspace of $\g{g}_\alpha$, and $T = B + U +x Z$ is orthogonal to $\g{l}$, with $U \in \g{g}_\alpha \ominus \g{m}$ and $x \in \R$. If $\ad (T)^t$ is a derivation of $\g{s}$, then $U$ is $\C$-orthogonal to~$\g{m}$.
\end{lemma}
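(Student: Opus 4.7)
The plan is to extract the desired $\C$-orthogonality of $U$ and $\g{m}$ by testing the derivation identity $\ad(T)^t[X,Y]=[\ad(T)^tX,Y]+[X,\ad(T)^tY]$ on one carefully chosen pair, namely $X=T$ and $Y\in\g{m}$. This choice produces, on the left-hand side, an extra $Z$-term coming from the factor $\langle JU,Y\rangle$ in $[T,Y]$, while on the right-hand side such a term appears only once; the mismatch will be exactly what forces $\langle JU,Y\rangle=0$ for all $Y\in\g{m}$.

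Before computing, I would unpack the orthogonality hypothesis. Since $\oplus$ denotes orthogonal direct sum (Remark~\ref{remark:notation}) and $\g{l}=\g{m}\oplus\g{g}_{2\alpha}$, the condition $T\perp\g{l}$ combined with $B\in\g{a}$ and $U\in\g{g}_\alpha$ being orthogonal to $Z\in\g{g}_{2\alpha}$ forces $x=0$. Thus effectively $T=B+U$ with $U\perp\g{m}$, and then $\{T\}$ together with an orthonormal basis of $\g{m}$ and the unit vector $Z$ is an orthogonal basis of $\g{s}$, with respect to which transposes are transparent to compute.

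Using~\eqref{equation:brackets} one gets $\ad(T)T=0$, $\ad(T)Y=\tfrac{1}{2}Y+\langle JU,Y\rangle Z$ for $Y\in\g{m}$, and $\ad(T)Z=Z$. Transposing in the orthogonal basis above yields $\ad(T)^tT=0$, $\ad(T)^tY=\tfrac{1}{2}Y$ for $Y\in\g{m}$, and $\ad(T)^tZ=\pi(JU)+Z$, where $\pi(JU)$ denotes the orthogonal projection of $JU$ onto $\g{m}$. Applying the derivation identity to $X=T$ and an arbitrary $Y\in\g{m}$, the left-hand side becomes
\[
\ad(T)^t[T,Y]=\tfrac{1}{4}Y+\langle JU,Y\rangle\bigl(\pi(JU)+Z\bigr),
\]
while the right-hand side reduces to $\tfrac{1}{2}[T,Y]=\tfrac{1}{4}Y+\tfrac{1}{2}\langle JU,Y\rangle Z$. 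Subtracting and splitting components in the orthogonal decomposition $\g{s}=\R T\oplus\g{m}\oplus\R Z$, the $Z$-component alone yields $\langle JU,Y\rangle=0$ for every $Y\in\g{m}$. Together with the hypothesis $U\perp\g{m}$, this is exactly $\C U\perp\g{m}$.

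The main, and really only, obstacle is bookkeeping: one must recognize that the orthogonal-direct-sum convention forces $x=0$, so that the $Z$-row of $\ad(T)^t$ decouples cleanly from the $\g{m}$-rows. Once this is in place, the single test pair $(T,Y\in\g{m})$ delivers the full conclusion, and no further pairs (say inside $\g{m}$ or involving $Z$) need to be examined.
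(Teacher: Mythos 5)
Your proof is correct and follows essentially the same route as the paper's: compute $\ad(T)$ and its transpose on an adapted orthogonal basis of $\g{s}$, observe that the only asymmetry is the projection of $JU$ onto $\g{m}$ appearing in the $Z$-row, and test the derivation identity on a single pair involving $T$. The only (immaterial) difference is the choice of test pair — you use $(T,Y)$ with $Y\in\g{m}$ and read off the $\g{g}_{2\alpha}$-component, whereas the paper uses $(T,Z)$ and reads off the $\g{m}$-component; both reduce to $\langle JU,Y\rangle=\tfrac{1}{2}\langle JU,Y\rangle$.
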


\begin{proof}
We will assume that $JU$ is not orthogonal to $V$ for some unit $V \in \g{m}$, and we will get a contradiction with the fact that $\ad(T)^t$ is a derivation of $\g{s}$. From~\eqref{equation:brackets} we have $\ad(T) T = 0$, $\ad(T) W = (1/2)W + \langle JU, W\rangle Z$ for any $W \in \g{m}$, and $\ad(T) Z = Z$. Let $\{ V_1, \dots, V_m \}$ be an orthonormal basis of $\g{m}$ with $V_1 = V$. Then $\ad(T)^t T = 0$, $\ad(T)^t W = (1/2)W$ for any $W \in \g{m}$, and $\ad(T)^t Z = Z + \sum_{i=1}^m \langle JU, V_i\rangle V_i$. Thus, condition~\eqref{definition:derivation} does not hold if we take $D = \ad(T)^t$, $X = T$ and $Y = Z$. This is because $\langle \ad(T)^t [T, Z], V \rangle = \langle JU, V \rangle$,  $\langle [\ad(T)^t T, Z], V \rangle = 0$ and $\langle [T, \ad(T)^t Z], V \rangle = (1/2) \langle JU, V\rangle$.
\end{proof}

In the following result, we obtain all the non-nilpotent connected Lie subgroups of $AN$ that are Ricci solitons when endowed with the induced metric. The strategy consists in inspecting, by means of Proposition~\ref{proposition:lauret}, all the possible solvable extensions of the nilsolitons obtained in Proposition~\ref{proposition:nilradical} to solvsolitons realized as Lie subgroups of $AN$. 

\begin{proposition}\label{proposition:solvsolitons}
Let $S$ be a non-nilpotent connected Lie subgroup of the solvable part $AN$ of the Iwasawa decomposition associated with the complex hyperbolic space $\C H^n$. Then, $S$ is a Ricci soliton when considered with the induced metric if and only if one of the following conditions holds for its Lie algebra $\g{s}$:
\begin{enumerate}[{\rm (i)}]
\item $\g{s} = \R (B + U + x Z) \oplus \g{m}_{\pi/2} \oplus \R (V + t Z)$, where $\g{m}_{\pi/2}$ is a totally real subspace of $\g{g}_\alpha$ (non-trivial if both $V$ and $t$ are zero), $U$, $V \in \g{g}_\alpha$ are $\C$-orthogonal to $\g{m}_{\pi/2}$ satisfying $\langle JU, V \rangle = -(t/2)$ when $V \neq 0$, and $t$, $x \in \R$. In this case, $S$ is Einstein if and only if: $V \neq 0$; or $V = 0$, and either $t = 0$ or $\g{m}_{\pi/2}= 0$, but they are not zero~simultaneously.\label{proposition:solvsolitons:1}
		
\item $\g{s} = \R(B + U) \oplus \g{m}_{\varphi} \oplus  \g{m}_{\pi/2} \oplus \g{g}_{2\alpha}$, where $\g{m}_\varphi$ is a non-zero subspace of $\g{g}_\alpha$ of constant Kähler angle $\varphi \in [0, \pi/2)$, $\g{m}_{\pi/2}$ is a totally real subspace of $\g{g}_\alpha$, and $U \in \g{g}_\alpha$ is $\C$-orthogonal to $ \g{m}_{\varphi} \oplus  \g{m}_{\pi/2}$ satisfying~\eqref{equation:U}. In this case, $S$ is Einstein if and only if~$\g{m}_{\pi/2} = 0$. \label{proposition:solvsolitons:2}
\end{enumerate}
\end{proposition}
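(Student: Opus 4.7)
The plan is to combine the reduction from Lemma~\ref{lemma:nilpotent:nilradical}~(\ref{lemma:nilpotent:nilradical:2}) with the soliton recipe of Proposition~\ref{proposition:lauret}. Any non-nilpotent $\g{s}$ has the form $\g{s}=\R T\oplus\g{l}$, where $T=B+U+xZ$ is orthogonal to the nilradical $\g{l}\subset\g{n}$, and by Proposition~\ref{proposition:nilradical} $\g{l}$ falls into exactly two families: the abelian case~(\ref{proposition:nilradical:1}) or the non-abelian case~(\ref{proposition:nilradical:2}). Since $\g{b}=\R T$ is one-dimensional, condition~(\ref{proposition:lauret:2}) of Proposition~\ref{proposition:lauret} is automatic, while~(\ref{proposition:lauret:1}) is guaranteed by the choice of nilradical; the task is therefore to unravel conditions~(\ref{proposition:lauret:3}) and~(\ref{proposition:lauret:4}).

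Next I would extract the algebraic constraints on $U,V,x,t$ from subalgebra closure and from condition~(\ref{proposition:lauret:3}). In case~(\ref{proposition:solvsolitons:2}), the inclusion $\g{g}_{2\alpha}\subset\g{l}$ together with $T\perp\g{l}$ forces $x=0$, and Lemma~\ref{lemma:derivation:JU} applied with $\g{m}=\g{m}_\varphi\oplus\g{m}_{\pi/2}$ yields $\C U\perp\g{m}_\varphi\oplus\g{m}_{\pi/2}$. In case~(\ref{proposition:solvsolitons:1}), the key observation is that $Z\in\g{s}$ if and only if $V=0$ and $t\neq 0$. Computing $[T,W]$, $[T,V+tZ]$ and $[W,V+tZ]$ via~\eqref{equation:brackets}, closure of $\g{s}$ forces $\langle JU,V\rangle=-t/2$ (when $V\neq 0$) and $\C U\perp\g{m}_{\pi/2}$ (whenever $Z\notin\g{s}$); in the remaining subcase $V=0$, $t\neq 0$, the same conclusion $\C U\perp\g{m}_{\pi/2}$ is produced by Lemma~\ref{lemma:derivation:JU}. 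Once these constraints are in place, $\ad(T)|_\g{l}$ becomes diagonal in a natural orthonormal basis of $\g{l}$, with eigenvalues in $\{1/2,1\}$; combined with $(\ad T)^t T=0$ (which holds since $[T,\g{l}]\subset\g{l}$), this gives $(\ad T)^t=\ad T$ on $\g{s}$, so condition~(\ref{proposition:lauret:3}) is automatic.

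Condition~(\ref{proposition:lauret:4}) is then a single scalar equation: self-adjointness makes $((\ad T)+(\ad T)^t)^2$ equal to $4\ad(T)^2$ on $\g{l}$, whose trace is immediate from the eigenvalues above. In case~(\ref{proposition:solvsolitons:2}), the constant $\bar c$ is pinned down by Remark~\ref{remark:data:nilsolitons} as $-(1/4)\cos^2\varphi(\dim\g{m}_\varphi+4)$, and substituting into~(\ref{proposition:lauret:4}) solves for $|U|^2$ giving exactly the formula in the statement. In case~(\ref{proposition:solvsolitons:1}), $\g{l}$ is flat so $\bar c$ is unconstrained by~(\ref{proposition:lauret:1}), and~(\ref{proposition:lauret:4}) determines $\bar c<0$ uniquely from $|T|^2$ and the trace, imposing no further restriction on the parameters. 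For the Einstein dichotomy I would invoke the last claim of Proposition~\ref{proposition:lauret}: $S$ is Einstein if and only if $D_1=\Ric^L-\bar c\,\id$ is proportional to $\ad(T)|_\g{l}$. In case~(\ref{proposition:solvsolitons:1}), this forces $\ad(T)|_\g{l}$ itself to be a scalar multiple of $\id$, which fails precisely when both eigenvalues $1/2$ and $1$ appear, i.e.~when $V=0$, $t\neq 0$ and $\g{m}_{\pi/2}\neq 0$ simultaneously. In case~(\ref{proposition:solvsolitons:2}), the eigenvalues of $D_1$ from Remark~\ref{remark:data:nilsolitons} on $\g{m}_\varphi$ and on $\g{m}_{\pi/2}$ must coincide (since $\ad(T)$ has the same eigenvalue there); when $\dim\g{m}_{\pi/2}>0$ this simplifies to $\cos^2\varphi=0$, impossible since $\varphi<\pi/2$, so Einstein iff $\dim\g{m}_{\pi/2}=0$.

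The most delicate part will be the case split in~(\ref{proposition:solvsolitons:1}): whether $\C U\perp\g{m}_{\pi/2}$ arises from subalgebra closure (when $Z\notin\g{s}$) or from the derivation condition via Lemma~\ref{lemma:derivation:JU} (in the subcase $V=0$, $t\neq 0$), and verifying that the identity $\langle JU,V\rangle=-t/2$ is a genuine forcing and not vacuous when $V\neq 0$. Once these subcases are uniformly reconciled, the remainder of the argument is routine linear algebra on $\ad(T)|_\g{l}$.
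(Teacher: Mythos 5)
Your proposal is correct and follows essentially the same route as the paper: the reduction $\g{s}=\R T\oplus\g{l}$ via Lemma~\ref{lemma:nilpotent:nilradical}~(\ref{lemma:nilpotent:nilradical:2}), verification of Proposition~\ref{proposition:lauret}~(\ref{proposition:lauret:3}) through the same case split (subalgebra closure when $Z\notin\g{s}$, Lemma~\ref{lemma:derivation:JU} when $V=0$, $t\neq 0$), the scalar equation~(\ref{proposition:lauret:4}) with $\bar c$ from Remark~\ref{remark:data:nilsolitons} yielding~\eqref{equation:U}, and the Einstein dichotomy from the proportionality of $D_1$ to $\ad(T)\rvert_{\g{l}}$. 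The details you flag as delicate are resolved exactly as in the paper's argument.
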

	
\begin{proof}
Since $S$ is non-nilpotent, then $\dim S \geq 2$ and according to Lemma~\ref{lemma:nilpotent:nilradical}~(\ref{lemma:nilpotent:nilradical:2}) we can write $\g{s} = \R T \oplus \g{l}$, where $\g{l} \subset \g{n}$ is the nilradical of $\g{s}$ (classified in Proposition~\ref{proposition:nilradical}) and $T$ is a vector in $\g{a} \oplus \g{n}$ with non-trivial projection onto $\g{a}$ and orthogonal to $\g{l}$. Now, the proof is based on analyzing in which circumstances~$\g{s}$ satisfies the conditions from~(\ref{proposition:lauret:1}) to~(\ref{proposition:lauret:4}) in Proposition~\ref{proposition:lauret}. Conditions~(\ref{proposition:lauret:1}) and~(\ref{proposition:lauret:2}) are always satisfied, as $\g{l}$ will be taken from Proposition~\ref{proposition:nilradical} and $\g{b} = \R T$ is one-dimensional. At this point, we divide the proof depending on whether the nilradical is abelian or not, according to Proposition~\ref{proposition:nilradical}.

(\ref{proposition:solvsolitons:1}): \textbf{$\g{l}$ abelian.} In this case, any endomorphism of $\g{l}$ is also a derivation and we can write $\Ric^L = 0 = \bar{c} \id + D$ for any $\bar{c} < 0$, by taking $D = -\bar{c} \id$. Thus, condition~(\ref{proposition:lauret:4}) of Proposition~\ref{proposition:lauret} is trivially satisfied, as $\g{b} = \R T$ is one-dimensional. Therefore, we just need to analyze condition~(\ref{proposition:lauret:3}) of Proposition~\ref{proposition:lauret}.

According to Proposition~\ref{proposition:nilradical}~(\ref{proposition:nilradical:1}) and Lemma~\ref{lemma:derivation:JU}, we can write $\g{s} = \R T \oplus \g{m}_{\pi/2} \oplus \R (V + t Z)$, where $T = B + U + xZ$ with $U \in \g{g}_\alpha \ominus \g{m}_{\pi/2}$, $V \in \g{g}_\alpha$ is $\C$-orthogonal to $\g{m}_{\pi/2}$, and $t$, $x \in \R$. Note that $\g{s}$ has to be a Lie subalgebra of $\g{a} \oplus \g{n}$. Since $\g{m}_{\pi/2} \oplus \R (V + t Z)$ is abelian, we just need to check brackets involving the vector $T$. If $ V \neq 0$, from $[T, V + tZ] \in \g{s}$ and~\eqref{equation:brackets} we deduce that $\langle JU, V \rangle = -(t/2)$. If $V \neq 0$, or $V = 0$ and $t=0$, from $[T, W] \in \g{s}$ for any $W \in \g{m}_{\pi/2}$, using~\eqref{equation:brackets}, we deduce that $U$ is $\C$-orthogonal to $\g{m}_{\pi/2}$ (which is still true if $\g{m}_{\pi/2} = 0$). Thus, either $U$ is $\C$-orthogonal to $\g{m}_{\pi/2}$, or $V = 0$ and $t \neq 0$ (so we do not get any information from the Lie algebra condition). However, in this last case, if $S$ is a Ricci soliton, then $\ad(T)^t$ must be a derivation of $\g{s}$ and we use Lemma~\ref{lemma:derivation:JU}. In conclusion: if $S$ is a Ricci soliton, then $U$ is $\C$-orthogonal to $\g{m}_{\pi/2}$.

Now, $\ad(T) T = 0$ and $\ad(T) \rvert_{\g{m}_{\pi/2}} = (1/2) \id$. Moreover, $\ad(T) \rvert_{\R (V + t Z)}$ is $(1/2) \id$ if $V \neq 0$ and $\id$ otherwise. Therefore $\ad(T)^t \rvert_\g{s} = \ad(T) \rvert_\g{s}$, which implies that $\g{s}$ satisfies Proposition~\ref{proposition:lauret}~(\ref{proposition:lauret:3}) and $S$ is a Ricci soliton. This proves the first claim in~(\ref{proposition:solvsolitons:1}).

Finally, from Proposition~\ref{proposition:lauret}, we have that $S$ is Einstein if and only if $\ad(T) \rvert_{\g{m}_{\pi/2} \oplus \R (V + t Z)}$ is proportional to $D = -c \id$. But this happens if and only if: $V \neq 0$; or $V = 0$ and either $t = 0$ or $\g{m}_{\pi/2} = 0$, but they are not simultaneously trivial (as $\dim S \geq 2$). 

(\ref{proposition:solvsolitons:2}): \textbf{$\g{l}$ non-abelian.} According to Proposition~\ref{proposition:nilradical}~(\ref{proposition:nilradical:2}),  Lemma~\ref{lemma:nilpotent:nilradical}~(\ref{lemma:nilpotent:nilradical:2}) and since $\g{g}_{2\alpha}$ is contained in $\g{s}$, we get $\g{s} = \R T \oplus \g{m}_\varphi \oplus \g{m}_{\pi/2} \oplus \g{g}_{2 \alpha}$, where $\g{m}_\varphi$ is a non-zero subspace of $\g{g}_\alpha$ of constant Kähler angle $\varphi \in [0, \pi/2)$, $ \g{m}_{\pi/2}$ is a totally real subspace of $\g{g}_\alpha$, and $T = B+U$, with $U \in \g{g}_\alpha \ominus (\g{m}_\varphi \oplus \g{m}_{\pi/2})$. It is clear that $\g{s}$ is always a Lie subalgebra of $\g{a} \oplus \g{n}$. Put $\g{m} = \g{m}_\varphi \oplus \g{m}_{\pi/2}$. Combining Proposition~\ref{proposition:lauret}~(\ref{proposition:lauret:3}) with Lemma~\ref{lemma:derivation:JU}, we deduce that if $S$ is a Ricci soliton, then $U$ must be $\C$-orthogonal to $\g{m}$. Then, we obtain $\ad(T)T=0$, $\ad(T) \rvert_\g{m}  = (1/2) \id$, and $\ad(T) \rvert_{\g{g}_{2\alpha}} = \id$. Thus $\ad(T)^t \rvert_\g{s} = \ad(T) \rvert_\g{s}$ is a derivation of $\g{s}$. Moreover, we have $\tr \ad^2(T) \rvert_{\g{s}} = (1/4) \dim \g{m} + 1$. Now, using~\eqref{equation:ricci:nonabelian:nilradical} from Remark~\ref{remark:data:nilsolitons}, we obtain that condition~(\ref{proposition:lauret:4}) from Proposition~\ref{proposition:lauret} reads as
\begin{equation}\label{equation:U}
\langle U, U \rangle = \langle T, T \rangle - 1 = \frac{\dim \g{m} +4}{(\dim \g{m}_\varphi +4) \cos ^2(\varphi)}-1.
\end{equation}
Hence, $S$ is a Ricci soliton if and only if $U$ satisfies~\eqref{equation:U}. Let us study in which cases $S$ is Einstein. According to Remark~\ref{remark:data:nilsolitons} and the last claim in Proposition~\ref{proposition:lauret}, $S$ is Einstein if and only $\ad(T) \rvert_{\g{l}}$ is proportional to $((\tr \Ss_B) \ad(B) - \mathcal{D}) \rvert_{\g{l}}$. Recall that $\mathcal{D}$ was explicitly calculated in~\eqref{equation:nilsoliton:after}. Since $\ad(T) \rvert_{\g{l}} = \ad(B) \rvert_{\g{l}}$, then $S$ is Einstein if and only if $\mathcal{D}$ is also proportional to them. Since $\varphi \neq \pi/2$, this happens if and only if $\g{m}_{\pi/2} = 0$, as follows~from~\eqref{equation:nilsoliton:after}.
\end{proof}

We are now in position to prove the main result of this paper.

\begin{proof}[Proof of the Main Theorem]
Let $S$ be a connected Lie subgroup of dimension $\geq 2$ of the solvable part $AN$ of the Iwasawa decomposition associated with a complex hyperbolic space. If $S$ is a Ricci soliton when considered with the induced metric, then it must appear in Proposition~\ref{proposition:nilradical} if it is nilpotent (see Remark~\ref{remark:nilradical:nilsoliton}), or in Proposition~\ref{proposition:solvsolitons} otherwise. In this line, examples in Proposition~\ref{proposition:nilradical}~(\ref{proposition:nilradical:1}) correspond to those in Main Theorem~(\ref{MainTheorem:1}). Examples in Proposition~\ref{proposition:nilradical}~(\ref{proposition:nilradical:2}) give rise to those in Main Theorem~(\ref{MainTheorem:4}). Examples in Proposition~\ref{proposition:solvsolitons}~(\ref{proposition:solvsolitons:1}) correspond to items~(\ref{MainTheorem:2}) or~(\ref{MainTheorem:5}) of the Main Theorem, depending on whether they are Einstein or not, respectively. Finally, examples from Proposition~\ref{proposition:solvsolitons}~(\ref{proposition:solvsolitons:2}) correspond to those in Main Theorem~(\ref{MainTheorem:3}) if they are Einstein ($\g{m}_{\pi/2} = 0$) and to the ones in Main Theorem~(\ref{MainTheorem:6}) otherwise.

Let us justify the second claim of each item of the Main Theorem.

On the one hand, consider the subalgebra $\g{s}$ of item~(\ref{MainTheorem:1}). Since $S$ is abelian and diffeomorphic to a Euclidean space (see Section~\ref{section:preliminaries}), it is isometric to a Euclidean space. This completes the proof of items~(\ref{MainTheorem:1}) and~(\ref{MainTheorem:5}).

On the other hand, consider the Lie algebra $\g{s}$ of item~(\ref{MainTheorem:2}). Assume first $\g{m}_{\pi/2} \neq 0$ or $V \neq 0$. Consider the totally real subspace $\g{r} = \g{m}_{\pi/2} \oplus \R V$ of $\g{g}_\alpha$. Note that the connected Lie subgroup of $AN$ whose Lie algebra is $\R B \oplus \g{r}$ is isometric to a real hyperbolic space and it is a totally geodesic submanifold of $AN$ as well. Now, consider the Lie algebra isomorphism $f \colon \R B \oplus \g{r} \to  \g{s}$ mapping $B$ to $(B+U+xZ)$, $\frac{V}{|V|}$ to $\frac{|B+U+xZ|}{|V+tZ|}(V + tZ)$, and  $f \rvert_{\g{m}_{\pi/2}} =|B+U+xZ| \id$. Note that $f$ is a homothety, as $\langle f(X), f(X) \rangle =|B+U+xZ|^2 \langle X, X \rangle$ for any $X \in \R B \oplus \g{r}$. Since we are working with left-invariant metrics we get that $S$ is isometric to a real hyperbolic space up to scaling. The remaining case corresponds to $\g{s} = \R (B+U) \oplus \R Z$. Similarly, one gets that this Lie algebra is isomorphic and isometric, up to scaling, to $\R B \oplus \R Z$, which corresponds to the Lie algebra of a totally geodesic $\C H^1 \cong \R H^2$ in $AN$. This completes the proof of item~(\ref{MainTheorem:2}). 

Finally, consider the Lie algebra $\g{s}$ given in Proposition~\ref{proposition:solvsolitons}~(\ref{proposition:solvsolitons:2}) and take $\g{m}_{\pi/2}=0$. Let $\g{c}$ be a complex subspace of $\g{g}_\alpha$ of the same dimension as $\g{m}_\varphi$. Note that the connected Lie subgroup of $AN$ with Lie algebra $\g{h} = \g{a} \oplus \g{c} \oplus \g{g}_{2\alpha}$ is isometric to a complex hyperbolic space and it is a totally geodesic submanifold of $AN$. Consider the orthonormal basis of $\g{m}_\varphi$ given by Proposition~\ref{proposition:kahler:angle}~(\ref{proposition:kahler:angle:3}), and an orthonormal basis of $\g{c}$ of the form $\{X_1, J X_1, \dots, X_{k_\varphi}, JX_{k_\varphi}\}$. Then, the unique linear map $f \colon \g{s} \to \g{h}$ satisfying
\begin{equation*}
f(T) = B, \,  f (e_{2l-1}) = \cos \varphi X_l,\,  f (\cos \varphi J e_{2l-1} + \sin \varphi  Je_{2l}) = \cos \varphi J X_l \, \, \,  \text{and } \,  f(Z) = \cos \varphi Z
\end{equation*}
with $T = B+U$ and $l \in \{1, \dots, k_\varphi\}$, is an isomorphism of Lie algebras and an homothety, as $\langle f(X), f(X) \rangle =\cos^2 \varphi \langle X, X \rangle$ for any $X \in \g{s}$,  since $U$ satisfies~\eqref{equation:U}. Thus, we get that $S$ is isometric, up to scaling, to a complex hyperbolic space, which completes the proof of item~(\ref{MainTheorem:3}). Moreover, $f \rvert_{\g{m}_{\varphi} \oplus \g{g}_{2 \alpha}} \colon \g{m}_{\varphi} \oplus \g{g}_{2 \alpha} \to \g{c} \oplus \g{g}_{2 \alpha}  $ is still an isomorphism of Lie algebras and an homothety of constant $\cos^2 \varphi$. Note that the connected Lie subgroup of $AN$ with Lie algebra $\g{c} \oplus \g{g}_{2 \alpha}$ is isomorphic to a Heisenberg group~\cite[Chapter~3]{BTV95}. Since $[\g{m}_{\pi/2}, \g{m}_\varphi] = [\g{m}_{\pi/2}, \g{g}_{2 \alpha}] = 0$, the connected Lie subgroup of $AN$ with a Lie algebra as in item~(\ref{MainTheorem:4}) is isometric, up to scaling, to the product of a Heisenberg group times a Euclidean space (of dimension $\dim \g{m}_{\pi/2}$). This completes the proof of items~(\ref{MainTheorem:4}) and~(\ref{MainTheorem:6}).
\end{proof}

Corollaries~\ref{corollary:real:hyperbolic:space}, ~\ref{corollary:totally:geodesic:symmetric:space} and~\ref{corollary:extension} follow directly from the Main Theorem.

\begin{proof}[Proof of Corollary~\ref{corollary:hopf:isoparametric}]
In order to prove this result, we will see that the only minimal examples of the Main Theorem correspond to one of the following cases: item~(\ref{MainTheorem:2}) with $U = V = 0$, $x = 0$, and either $\g{m}_{\pi/2} = 0$ or $t=0$; item~(\ref{MainTheorem:3}) with $\varphi = 0$ (and thus $U = 0$); and item~(\ref{MainTheorem:5}) with $U = 0$. In the first two cases, $S$ is either a totally geodesic $\R H^k$ or a totally geodesic $\C H^k$ in $\C H^n$, for some $k \leq n$. Therefore, $S$ is the singular orbit of a cohomogeneity one action whose principal orbits are Hopf hypersurfaces on a totally geodesic $\C H^k$ in $\C H^n$~\cite{DDS:adv} (where $k$ can be taken as $n$ if $S \cong \C H^k$). In the last case, Main Theorem~(\ref{MainTheorem:5}) with $U = 0$, $S$ is a minimal (not totally geodesic) submanifold. Indeed: if $n =2$, then $S$ is the so-called Lohnherr hypersurface; if $n >2$, then $S$ is the focal set of an isoparametric family of hypersurfaces that are homogeneous if and only if $\g{g}_{\alpha} \ominus \g{m}_{\pi/2}$ is totally real~\cite{DDS:adv}.

In order to address this proof, we will need to calculate several shape operators, so we will make extensive use of Lemma~\ref{lemma:nablas} without mentioning it explicitly.

Assume first that $S$ is under the conditions of items~(\ref{MainTheorem:1}) or~(\ref{MainTheorem:4}) of the Main Theorem. Then $B$ is a normal vector to $S$ in $AN$. Now, we get $\Ss_B X = (1/2) X$ for any $X \in \g{g}_\alpha \cap \g{s}$. Moreover, $\Ss_B Z = Z$ if $\g{s}$ corresponds to item~(\ref{MainTheorem:4}) and 
\[
\langle \Ss_B (V+tZ),  (V + t Z) \rangle= (1/2)\langle V, V \rangle +t^2 
\]
if $\g{s}$ corresponds to item~(\ref{MainTheorem:1}). Therefore, $\tr \Ss_B > 0 $ and $S$ is not minimal under the conditions of items~(\ref{MainTheorem:1}) or~(\ref{MainTheorem:4}) of the Main Theorem.
 
Let us focus on the rest of the cases assuming first $U \neq 0$. Then, $\xi = |U|^2 B - U$ is a normal vector to $S$ in $AN$. If $X$ is a unit vector in $\g{s} \cap \g{g}_\alpha$, then $2 \langle \Ss_\xi X, X \rangle = |U|^2$. Moreover, if $Z \in \g{s}$, we get $\langle \Ss_\xi Z, Z \rangle = |U|^2$, and if $B+U+xZ \in \g{s}$, we get
\begin{equation}\label{equation:shape:minimal}
\langle \Ss_\xi (B+U+xZ), (B+U+xZ) \rangle  = (1/2) |U|^2 ( |U|^2+1+2x^2). 
\end{equation}
From these considerations and taking $x = 0$ in~\eqref{equation:shape:minimal}, we conclude that $S$ is not minimal under the conditions of items~(\ref{MainTheorem:3}), (\ref{MainTheorem:5}) or~(\ref{MainTheorem:6}) of the Main Theorem if $U \neq 0$. Note that in item~(\ref{MainTheorem:6}) the vector $U$ is always different from zero, as follows from~\eqref{equation:U} and the fact that $\g{m}_\varphi$, $\g{m}_{\pi/2}$ are non-trivial. As explained above, if $U = 0$ in items~(\ref{MainTheorem:3}) or~(\ref{MainTheorem:5}) of the Main Theorem, then $S$ is a totally geodesic Lie subgroup of $AN$.

Thus, we focus on item~(\ref{MainTheorem:2}) of the Main Theorem, which is the remaining case to analyze. Using~\eqref{equation:levi-civita}, we get  
\begin{equation}\label{equation:shape:minimal:case2}
\langle \Ss_{\xi} (V + t Z), (V + t Z) \rangle = (1/2) (|U|^2 |V|^2 +t \langle JV, U \rangle +2 t^2 |U|^2).
\end{equation}
If $U \neq 0$, from equation~\eqref{equation:shape:minimal} and its previous two lines, equation~\eqref{equation:shape:minimal:case2} and $\langle JV, U \rangle = -\langle V, JU \rangle = (t/2)$ if $ V \neq 0$, we get that $S$ is not minimal. Thus, put $U = 0$. Hence, from the subalgebra condition one gets $V = 0$ or $t = 0$. Recall that $V = 0$ implies that either $t = 0$ or $\g{m}_{\pi/2} = 0$ (but not simultaneously). Thus, we have to analyze these two remaining cases: $\g{s} = \R (B+xZ) \oplus \g{w}_{\pi/2}$, where $\g{w}_{\pi/2}$ is a totally real subspace of $\g{g}_\alpha$; and $\g{s} = \R (B+xZ) \oplus \g{g}_{2\alpha}$. Since decompositions in the Main Theorem are orthogonal, the second case is $\g{s} = \R B \oplus \g{g}_{2\alpha}$, and then $S$ is isometric to a totally geodesic $\C H^1$ in $\C H^n$. Thus, $S$ is the singular orbit of cohomogeneity one action whose principal curvatures are Hopf hypersurfaces. For the remaining case, take the normal vector $\eta = x B - Z$. Then $\langle \Ss_\eta (B+ xZ), (B+ xZ)\rangle = x (x^2+1)$ and $\langle \Ss_\eta X, X \rangle = (1/2)x$ for any $X \in \g{w}_{\pi/2}$. Thus, minimality implies $x = 0$ and $S$ is a totally geodesic real hyperbolic space.

Finally, note that for any real subspace $\g{m}$ of $\g{g}_\alpha$, the connected Lie subgroup of $AN$ with Lie algebra $\g{s} = \R B \oplus \g{m} \oplus \g{g}_{2\alpha}$ leads to the (minimal) focal set of an isoparametric family of hypersurfaces~\cite{DDS:adv}. However, we have obtained that these focal sets are Ricci solitons with the induced metric only when $\g{m}$ is complex or totally real.
\end{proof}

Corollary~\ref{corollary:einstein:minimal:totallygeodesic} follows directly from Corollary~\ref{corollary:hopf:isoparametric} and the Main Theorem.

\begin{proof}[Proof of Corollary~\ref{corollary:minimal:n}]
We just need to see that the examples in Main Theorem~(\ref{MainTheorem:4}) give rise to minimal submanifolds of $N$. Note that the Levi-Civita connection of $N$ is achieved by projecting orthogonally the Levi-Civita connection of $AN$ onto $\g{n}$. Hence, the minimality follows from the last claim of Lemma~\ref{lemma:shape:xi}.	
\end{proof}

\begin{remark}
Corollary~\ref{corollary:minimal:n} is not true if we remove the non-flatness condition. Indeed, let $L$ be the connected Lie subgroup of $N$ with Lie algebra as in Main Theorem~(\ref{MainTheorem:1}), with $V$ of unit length and $t \neq 0$. Then, one can see that the trace of the shape operator of $L$ as a submanifold of $N$ with respect to the unit normal vector $JV$ is different from zero.
\end{remark}

We finish the paper with a remark concerning the congruence classes of the examples of the Main Theorem.

\begin{remark}\label{remark:congruence}
Let $\g{s}$ be a Lie subalgebra of $\g{a} \oplus \g{n}$. Since $\g{a} \oplus \g{n}$ is a complex vector space, then $\g{s}$ can be decomposed in a unique way as a $\C$-orthogonal sum of subspaces of $\g{a}\oplus \g{n}$ of constant Kähler angle, as follows from Proposition~\ref{proposition:kahler:angle}. In Table~\ref{table:a}, we detail the (different) Kähler angles of the subspaces stemming from such decomposition and their corresponding dimensions, for the Lie subalgebra $\g{s}$ of $\g{a} \oplus \g{n}$, depending on the item of the Main Theorem. For items~(\ref{MainTheorem:3}) and~(\ref{MainTheorem:6}) we have used the conditions on $U$ obtained in the Main Theorem.
\begin{table}[h]
\begin{center}	
\renewcommand*{\arraystretch}{1.2}	\scriptsize
\begin{tabular}{|c|c|c|c|c|c|}
\hline
Case  & (\ref{MainTheorem:1}) & (\ref{MainTheorem:3}) & (\ref{MainTheorem:4})  & (\ref{MainTheorem:5}) & (\ref{MainTheorem:6})  \\ [1ex]
\hline
Kähler angles & $\pi/2$ & $\varphi$ & $\varphi$, $\frac{\pi}{2}$ &  $\frac{\pi}{2}$, $\arccos(1+|U|^2)^{-1}$ & $\varphi$, $\frac{\pi}{2}$, $\arccos\bigl(\frac{(\dim \g{m}_\varphi +4) \cos ^2(\varphi)}{\dim \g{m}_\varphi +\dim \g{m}_{\pi/2} + 4}\bigr)$   \\[2ex]		
\hline
Dimensions & $\dim \g{s}$ & $\dim \g{s}$ & $\dim \g{m}_\varphi$, $\dim \g{m}_{\pi/2}+1$ &  $\dim \g{m}_{\pi/2}$, $2$ &  $\dim \g{m}_\varphi$, $\dim \g{m}_{\pi/2}$, $2$   \\[2ex]		
\hline
\end{tabular}
\bigskip
\caption{Decomposition of $\g{s}$, from Main Theorem, into subspaces of $\g{a} \oplus \g{n}$ of constant Kähler angle.}
\label{table:a}
\end{center}
\end{table}	
Now, let $\g{s}_1 = \R (B+U_1) \oplus \g{m}_1 \oplus \g{g}_{2\alpha}$ and $\g{s}_2 = \R (B+U_2) \oplus \g{m}_2 \oplus \g{g}_{2\alpha}$ be two subalgebras of $\g{a} \oplus \g{n}$, with $\g{m}_1$, $\g{m}_2 \subset \g{g}_\alpha$, both under the conditions of one (but the same) of the items from~(\ref{MainTheorem:3}) to~(\ref{MainTheorem:6}) of the Main Theorem. In item~(\ref{MainTheorem:4}) we just assume that $\R (B+U_1)$ and $\R (B+U_2)$ do not appear in the definition. Let $S_1$ and $S_2$ be the connected Lie subgroups of $AN$ whose Lie algebras are $\g{s}_1$ and $\g{s}_2$, respectively. 

Let $\phi$ be an isometry of $AN$ such that $\phi (S_1) = S_2$. Without loss of generality, we assume that $\phi$ preserves the neutral element of $AN$. Thus, it follows that $\phi$ is a unitary or anti-unitary transformation of $\g{a} \oplus \g{n}$ and maps subspaces of constant Kähler angle to subspaces of the same constant Kähler angle. Therefore, according to Table~\ref{table:a}, $\g{m}_1$ and $\g{m}_2$ must have the same Kähler angles with the same multiplicities. Note that this implies $|U_1| = |U_2|$ by means of the Main Theorem.

Conversely, assume that $\g{m}_1$ and $\g{m}_2$ have the same Kähler angles with the same multiplicities. Note that $K_0$, the connected Lie subgroup of $G = SU(1,n)$ with Lie algebra $\g{k}_0$, is isomorphic to $U(n-1)$ and acts on $\g{g}_\alpha$ (identified with $\C^{n-1}$) in the standard way. Since $U_1$ and $U_2$ are $\C$-orthogonal to $\g{m}_1$ and $\g{m}_2$, respectively, and of the same length, then there exists $k \in K_0$ such that $\Ad (k) (\g{m}_1 \oplus \R U_1) = \g{m}_2 \oplus \R U_2$, as follows from~\cite[Remark 2.10]{DDK17}. Since $\Ad (k) B = B$ and $\Ad (k) Z = Z$, then $\Ad (k) \g{s}_1 = \g{s}_2$ and $k(S_1) = S_2$. This gives the congruence classes from items~(\ref{MainTheorem:3}) to~(\ref{MainTheorem:6}), both included, in the Main Theorem.

Assume that $\g{s}$ is under the conditions of item~(\ref{MainTheorem:1}), or of item~(\ref{MainTheorem:2}) with $t = 0$, of the Main Theorem. Let $S$ be the connected Lie subgroup of $AN$ with Lie algebra $\g{s}$. Then $S$ is a homogeneous CR-submanifold (complex-real). These submanifolds were classified in~\cite[Theorem A, Theorem B]{DDP23} up to congruence, so one can consult the congruence classes there. The congruence classes can be completely determined if one analyzes item~(\ref{MainTheorem:2}) from the Main Theorem with $t \neq 0$. However, calculations are long and involved and we content ourselves with stating that they provide infinitely non-congruent examples, as follows from the proof of the Main Theorem.
\end{remark}

\enlargethispage{2\baselineskip}

\end{document}